\newcommand{\cs}{$\clubsuit$} 
\newcommand{\red}[1]{{\color{red}{#1}}}
\newcommand{\rankc}{\operatorname{rank}_{\mathbb{C}}}
\newcommand{\rankr}{\operatorname{rank}_{\mathbb{R}}}
\renewcommand{\Re}{{\operatorname{Re}\,}}
\renewcommand{\Im}{{\operatorname{Im}\,}}
\renewcommand{\epsilon}{\varepsilon}
\newcommand{\R}{{\mathbb R}}
\newcommand{\C}{{\mathbb C}}
\newcommand{\Z}{{\mathbb Z}}
\newcommand{\lan}{\left\langle}
\newcommand{\ran}{\right\rangle}
\newcommand{\mc}[1]{\mathcal{#1}}
\newcommand{\e}{\epsilon}
\newcommand{\re}{\mathbb{R}}
\newcommand{\Cc}{C_c^\infty}
\newcommand{\Id}{\operatorname{Id}}
\newcommand{\h}{\hbar}
\newcommand{\supp}{{\operatorname{supp\,}}}
\newcommand{\loc}{\operatorname{loc}}
\renewcommand{\phi}{\varphi}
\newcommand{\ical}{\mathfrak{I}}
\newcommand{\rcal}{\mathfrak{R}}
\newtheorem{theo}{{\sc Theorem}}
\newtheorem{cor}{{\sc Corollary}}[section]
\newtheorem{lem}[cor]{{\sc Lemma}}
\newtheorem{prop}[cor]{{\sc Proposition}}
\numberwithin{equation}{section}
\newenvironment{rem}{\medskip\noindent{\it Remark:\/} }{\medskip}
\newtheorem{defn}[cor]{{\sc Definition}}
\title[Sup bounds in QCI Systems]{Pointwise bounds for joint eigenfunctions of quantum completely integrable systems }
\author{Jeffrey Galkowski}
\address{Department of Mathematics, Northeastern University, Boston, MA, USA}
\email{jeffrey.galkowski@northeastern.edu}
\author{ John A. Toth}
\address{Department of Mathematics and Statistics, McGill University, Montr\'eal, QC, Canada}
\email{john.toth@mcgill.ca}
\date{}
\begin{document}

\maketitle
\begin{abstract}
Let $(M,g)$ be a compact Riemannian manifold and $P_1:=-h^2\Delta_g+V(x)-E_1$ so that $dp_1\neq 0$ on $p_1=0$. We assume that $P_1$ is quantum completely integrable in the sense that there exist functionally independent pseuodifferential operators $P_2,\dots P_n$  with $[P_i,P_j]=0$, $i,j=1,\dots n$. We study the pointwise bounds for the joint eigenfunctions, $u_h$ of the system $\{P_i\}_{i=1}^n$ with  $P_1u_h=E_1u_h+o(1)$. In Theorem \ref{QCI}, we first give polynomial improvements over the standard H\"ormander bounds for typical points in $M$. In two and three dimensions, these estimates agree with the Hardy exponent $h^{-\frac{1-n}{4}}$ and in higher dimensions we obtain a gain of $h^{\frac{1}{2}}$ over the H\"ormander bound.

In our second main result (Theorem \ref{expdecay}), under a real-analyticity assumption on the QCI system, we give exponential decay estimates for joint eigenfunctions at points outside the projection of invariant Lagrangian tori; that is  at points $x\in M$ in the ``microlocally forbidden" region  $p_1^{-1}(E_1)\cap \dots \cap p_n^{-1}(E_n)\cap T^*_xM=\emptyset.$  These bounds are sharp locally near the projection of the invariant tori.
\end{abstract}

\section{Introduction}

Let $(M^n,g)$ be a closed, compact $C^{\infty}$ manifold and $P_1(h): C^{\infty}(M) \rightarrow C^{\infty}(M)$ a self-adjoint semiclassical pseudodifferential  operator of order $m$ that is elliptic in the classical sense, i.e. $|p_1(x,\xi)|\geq c|\xi|^m.-C.$  Here, $h $ takes values in a discrete sequence $(h_j)_{j=1}^{\infty}$ with $h_j \rightarrow 0^+$ as $j \rightarrow \infty.$ We assume in addition that there exist functionally independent $h$-pseudodifferential operators $P_2(h),...,P_{n}(h)$ with the property that 
\begin{equation} \label{commutator}
[ P_{i}(h), P_{j}(h) ] = 0; \,\,\, i,j = 1,...,n. \end{equation}
In that case we say that $P_1(h)$ is quantum completely integrable (QCI).
  Given the joint eigenvalues $ E(h) = (E_{1}(h),...,E_n(h)) \in \R^n$ of $P_1(h),...P_n(h)$ we denote an $L^2$-normalized joint eigenfunction with joint eigenvalue $E(h)$  by $u_{E,h}$ (here, for notational simplicity we drop the dependence of $E$ on $h$ in the notation) and consequently,
$$  P_{j}(h) u_{E,h} = E_{j}(h)  u_{E,h}.$$
When the joint energy value $E$ is understood, we will sometimes abuse notation and simply write $u_h = u_{E,h}.$

The associated classical integrable system is governed by the moment map
 \begin{equation} \label{moment map}
{\mathcal P}: = (p_1,...,p_n):T^*M \rightarrow \R^n \end{equation}
 where $p_j \in C^{\infty}(T^*M); j=1,...,n$ are the semiclassical principal symbols of $P_j(h); j=1,...,n.$  For convenience, we will denote the corresponding QCI system by $\hat{\mathcal P} := (P_1,...,P_n).$
 
We assume throughout that the classical integrable system $p$ is {\em Liouville integrable}; that is there exists an open dense subset $ T^*M_{reg} \subset T^*M $  such that 
 \begin{equation} \label{liouville}
 \text{rank} (dp_1(x,\xi),....,dp_n(x,\xi) ) = n  \,\, \forall \, (x,\xi) \in T^*M_{reg}.\end{equation}
Following the notation in \cite{TZ}, we let ${\mathcal B} := {\mathcal P}(T^*M)$ and ${\mathcal B}_{reg} ={\mathcal P}(T^*M_{reg})$ denotes the set of regular values of the moment map. 

Since ${\mathcal P}$ is proper, the Liouville-Arnold theorem determines the symplectic structure of the level sets ${\mathcal P}^{-1}(E)$ where $E \in {\mathcal B}_{reg}.$ The level set
\begin{equation} \label{Liouville}
{\mathcal P}^{-1}(E) = \cup_{k=1}^{M} \Lambda_k(E), \end{equation}
where the $\Lambda_k(E)$'s are Lagrangian tori which are invariant under the joint bicharacteristic flow $G^{t}: T^*M \rightarrow T^*M, \, t = (t_1,...,t_n) \in \R^n,$
$ G^t (x,\xi) = \exp t_{1} H_{p_1} \circ \cdots \circ \exp t_n H_{p_n}(x,\xi).$ Here, $H_{p_j} = \sum_{k} \partial_{\xi_k }p_j  \partial_{x_k} - \partial_{x_k } p_j \partial_{\xi_k}$ is the Hamilton vector field of $p_j.$\\

In this paper, we are concerned with two questions regarding the joint eigenfunctions: (i) eigenfunction supremum bounds and (ii) eigenfunction decay estimates in the microlocally forbidden region, $M \setminus \pi( {\mathcal P}^{-1}(E) ).$

\subsection{Supremum Estimates}
  To state our first result on sup bounds, we need a definition. 

\begin{defn} \label{morseassumption}
Let $(M^n,g)$ be a Riemannian manifold and $P_j(h); j=1,...,n$ be a non-degenerate, QCI system with Hamiltonian $\hat{H} = P_1(h).$  Suppose  $E_1$ satisfies $\partial_\xi p_1\neq 0$ on $p_1^{-1}(E_1)$ and set 
$$\Sigma_{x,E_1}:=  \{ \xi \in T_x^*M;  \, p_1(x,\xi)= E_1\}.$$
We say that the system is of {\em Morse type at $x \in M$} if 
there exists $f \in C^{\infty}(\R^n,\R)$ and an $h$-pseudodifferential operator $ Q(h):= f(P_1(h),...,P_n(h))$ with the property that its principal symbol
$$ q  \, |_{ \Sigma_{x,E_1} } \,\,\text{is Morse for all} \,\, x \in M. $$
\end{defn}

Our first main result is

\begin{theo}
\label{t1}
 \label{QCI} Let $(M^n,g)$ be compact Riemannian manifold and  $\hat{\mathcal P}$ be a QCI system with quantum Hamiltonian  $P_1(h)= - h^2 \Delta_g + V $ where $V \in C^{\infty}(M;\R)$ and  $E_1 \in \R$ is a regular value of $p_1,$  {i.e. so that $dp_1 |_{p_1^{-1}(E_1)}  \neq 0.$}  Suppose  $\Omega$ is an open set with $\overline{\Omega} \subset  \{ V < E_1 \}$ and that the system $\hat{\mathcal P}$ is Morse type at $x$ for all $x \in \overline{\Omega}.$ Then, the $L^2$-normalized joint eigenfunctions, $u_h$, with $P_1(h) u_h = E_1(h) u_h, \,\,\, E_1(h) = E_1 +o(1)$ satisfy the supremum bounds

\begin{equation} \label{anydim}
\| u_h \|_{L^\infty(\overline{\Omega})} = O( h^{(2-n)/2 }), \quad  n>3. \end{equation}.

In the cases where $n=2$ or $n=3,$ one gets the Hardy-type supremum bounds: 
\begin{equation}
\label{local}
  \| u_h \|_{L^\infty( \overline{\Omega})} = \begin{cases}O(h^{-1/4})&n=2   \\ 
  O(h^{-1/2} |\log h|^{1/2} ), &n=3.\end{cases}\end{equation}

\end{theo} 

\pagebreak

\begin{rem} 
 \begin{enumerate}[(i)]
 \item In the special case of Laplace eigenfunctions, $P_1(h) = -h^2 \Delta_g - 1$; that is, $V=0$ and $E_1 = 1.$ 
 
 \smallskip
\item The estimate (\ref{anydim}) in Theorem \ref{QCI} gives an explicit polynomial improvement over the well-known H\"{o}rmander bound $ \|u_h \|_{L^\infty} = O(h^{ (1-n)/2}).$  In dimensions $n=2,3,$  modulo the logarithmic factor in the $n=3$ case, both the estimates in (\ref{local}) are consistent with the {\em Hardy type} bound $\| u_h \|_{L^\infty}  = O(h^{(1-n)/4})$.  
Moreover, these estimates are sharp and are also quite robust in that they apply to many QCI examples either {\em globally} (e.g. Liouville Laplacians or Neumann oscillators on tori), or {\em locally} away from isolated points (e.g. Laplacians on  convex surfaces of revolution, Laplacians on asymmetric ellipsoids (n=2,3), quantum Neumann oscillators (n=2,3), quantum spherical pendulum, and quantum Euler and Kovalevsky tops). We describe how the above results apply explicity in several classical examples in section \ref{examples}.

 In the global cases, the bounds in Theorem \ref{QCI} holds for {\em all} $\Omega$ with $\overline{\Omega} \subset \{ V < E \}.$ Otherwise, one must delete arbitrarily small  (but fixed independent of $h$) balls centered at a finite number of points (e.g. the umbilic points of an triaxial ellipsoid, or the poles of an  convex surface of revolution.)
Finally, we point out in the case of the Laplacian, $V=0,$ so that the potential well is the entire manifold, $M$, and the corresponding sup bounds hold over all of $M;$ that is, one can set $\overline{\Omega} = M$ in (\ref{local}).

\smallskip
\item We point out that in Theorem~\ref{QCI} we fix only the energy $E_1$. In particular, it is a statement about \emph{all} joint eigenfunctions so that $P_1u_h=(E_1+o(1))u_h$ and we crucially do not require that the total energy, $E\in\mathcal{B}$ is regular i.e. we do not require $E\in \mathcal{B}_{\text{reg}}$.
\end{enumerate}

\end{rem}

One of the quantum integrable examples where the Morse hypothesis of Theorem~\ref{t1} is \emph{not} satisfied at every point is that of the triaxial ellipsoid
\begin{equation}
\label{e:ellipsoid}
\mc{E}:=\Big\{ w\in \R^3\,\big|\, \sum_{j=1}^3\frac{w_j^2}{a_j^2}=1,\,0<a_3<a_2<a_1\Big\}.
\end{equation}
Here, there are four exceptional points, $\{p_j\}_{j=1}^4\in \mc{E}$, the umbillic points, where the integrable system is not of Morse type. Combining the proof of Theorem~\ref{t1} with results from~\cite{CG18}, we prove the following sup bound for the joint eigenfunctions:
\begin{theo}
\label{t:ellipse}
Let $\mc{E}$ as in~\eqref{e:ellipsoid} and $P=-h^2\Delta_g-1$. Then there is $C>0$ so that any $L^2$ normalized joint eigenfunction, $u_h$ of the QCI system satisfies
$$
\|u_h\|_{L^\infty(\mc{E})}\leq Ch^{-\frac{1}{2}}|\log h|^{-\frac{1}{2}}.
$$
\end{theo}
In~\cite{To96}, the second author showed that there are constants $c,h_0>0$ and a sequence of $L^2$ normalized joint eigenfunctions of the QCI system satisfying
$$
|u_h(p_i)|\geq ch^{-\frac{1}{2}}|\log h|^{-\frac{1}{2}}, \quad 0<h<h_0,
$$
and consequently, the estimate in Theorem \ref{t:ellipse} is sharp.


\subsection{Comparison with previous $L^\infty$ estimates}
In general, for normalized Laplace eigenfunctions on a compact manifold $M$ of dimension $n$ i.e. solving $(-h^2\Delta_g-1)u=0$, the celebrated works~\cite{Ho68,Ava,Lev} show that 
\begin{equation}
\label{e:stdBound}
\|u_h\|_{L^\infty}\leq Ch^{\frac{1-n}{2}}.
\end{equation}
Under certain geometric conditions on the manifold $M$, this bound can be improved to 
\begin{equation}
\label{e:stdBound1}
\|u_h\|_{L^\infty}=o(h^{\frac{1-n}{2}}).
\end{equation}
These conditions include non-existence of recurrent points (see~\cite{STZ,Gdefect,CG17}), which in particular is satisfied for manifolds without conjugate points. Under a certain uniform version of the non-recurrent hypothesis~\cite{CG18} shows that this can be improved to 
\begin{equation}
\label{e:stdBound2}
\|u_h\|_{L^\infty}\leq C\frac{h^{\frac{1-n}{2}}}{\sqrt{\log h^{-1}}}.
\end{equation}
This non-recurrent hypothesis is in particular satisfied on manifolds without conjugate points where improved $L^\infty$ estimates have been proved using the Hadamard parametrix in~\cite{Berard77,Bon}. Finally, in forthcoming work~\cite{GT18}, the authors give improvements of the form
\begin{equation}
\label{e:stdBound3}
\|u_h\|_{L^\infty}\leq Ch^{\frac{1-n}{2}+\delta} 
\end{equation}
for some explicit $\delta>0$ when the manifold has integrable geodesic flow. The only other polynomial improvements that the authors are aware of occur in the case of Hecke--Maas forms on certain arithmetic surfaces~\cite{I-S}.

In this paper, we assume that eigenfunctions are joint eigenfunctions of a quantum complete \emph{system} of equations. In~\cite{TZ02}, it is shown that if QCI Laplace eigenfunctions have sup-norms that are $O(1)$, then the manifold is, in fact, flat. Therefore, it is natural to understand the $L^\infty$ growth of eigenfunctions in the QCI case. We note that the QCI assumption is very rigid and allows us to give much stronger than the results mentioned above. Indeed, Theorem~\ref{QCI} achieves the so-called Hardy estimate in dimension $n=2$, and $n=3$ (modulo a $\sqrt{\log h^{-1}}$ loss)
$$ 
\|u_h\|_{L^\infty}\leq Ch^{-\frac{1-n}{4}} 
$$
which is expected to hold at a generic point on a generic manifold. Moreover, in any dimension $n$, under a generic assumption on the QCI system, we are able to give an explicit polynomial improvement over~\eqref{e:stdBound}. 

While this is a dramatic improvement over the bounds above, it is important to note that the assumption of quantum complete integrability is highly sensitive. First, any small perturbation of the original operator (even a lower order perturbation) will destroy the property of being quantum integrable. Furthermore, even if the Laplacian is quantum integrable, it is not clear that all eigenfunctions for the Laplacian are joint eigenfunctions of the corresponding QCI system.  On the other hand, the approaches used to obtain~\eqref{e:stdBound},~\eqref{e:stdBound1},~\eqref{e:stdBound2} and ~\eqref{e:stdBound3} are robust to lower order perturbations and apply to \emph{all} sequences of eigenfunctions.

Our bounds are related to those in~\cite{Sa} where Sarnak shows that on a locally symmetric space of rank $r$, 
$$
\|u_h\|_{L^\infty}\leq Ch^{\frac{r-n}{2}}.
$$
and the generalization of this bound to joint quasimodes of $r$ essentially commuting operators with independent fiber differentials~\cite{Ta18}. We point out that while for some specific energy levels $E$, there are points satisfying the independent fiber differential assumption, the only quantum integrable example we are aware of in which there is a \emph{single} point $x$ satisfying this assumption for all energy levels is that of the flat torus. 
We also note that our results in Theorem \ref{QCI} apply in the case of many QCI systems that {\em do not} arise from isometric group actions; these include Liouville Laplacians on tori, Laplacians on asymmertric ellipsoids, quantum Neumann oscillators on spheres and quantum Kowalevsky tops, among others.

\subsection{Exponential Decay Estimates}

Our next result deals with exponential decay estimates for joint eigenfunctions in the  microlocal ``forbidden" region $M \setminus \pi(\Lambda_{\R})$ with 
$$
\Lambda_{\R}=\bigcap_{i=1}^n p_i^{-1}(E_i).
$$ 
We make the additional assumption that $P_j(h): j=1,..,n$ are real-analytic, $h$-differential operators and that the restricted canonical projection
$$ \pi_{\Lambda}: \Lambda_{\R}(E) \to M, \quad E= (E_1,...,E_n),$$
has a fold singularity along the {\em caustic} ${\mathcal C}_{\Lambda} = \pi_{\Lambda}^{-1} ( \, \partial \pi_{\Lambda}(\Lambda_{\R}(E)) \, ).$   One can complexify $\Lambda_{\R}$ to a complex submanifold, $\tilde{\Lambda},$  of the complexification, $\widetilde{T^*M}$, of the real cotangent bundle. Here, $\tilde{\Lambda}$ is Lagrangian with respect to the canonical complex symplectic form $\Omega^{\C} = d \omega^{\C}$ on $\widetilde{T^*M}$, where $\omega^{\C}$ is the complex canonical one-form on $\widetilde{T^*M}.$ In the terminology of~\cite{SjA}, $\tilde{\Lambda}$  is $\C$-Lagrangian. There is a further submanifold $\tilde{\Gamma}_I \subset \tilde{\Lambda}$ given by
$$\tilde{\Gamma}_I:= \tilde{\Lambda} \cap  \widetilde{T^*M}_M$$
that is of particular interest to the study of eigenfunction decay.  Roughly speaking, $\tilde{\Gamma}_I$ is subset of $\tilde{\Lambda}$ that consists of points with real base coordinates.  We also show in subsection \ref{s:fold} (see Proposition \ref{ilag}), under the fold assumption, one can characterize the structure of $\tilde{\Gamma}_I $ quite readily near ${\mathcal C}_{\Lambda};$ at least locally, one can write
$$ \tilde{\Gamma}_I = \Lambda_{\R} \cup \Gamma_I.$$
Both $\Lambda_{\R}$ and $\Gamma_I$ are isotropic with respect to $\Im \Omega^{\C}$ (ie. they are $I$-isotropic) and $\Gamma_I$ locally projects to the microlocally forbidden region, $M \setminus \pi(\Lambda_{\R}).$  Moreover, $\Gamma_I$ is locally a graph over $M$ away from the projection of the caustic $\partial \pi(\Lambda_\R)$  with
\begin{equation} \label{complexgraph}
 \Gamma_I = \{ (x, d_x \psi(x)); x \in \pi (\Gamma_I) \} \end{equation} where $\psi$ is complex-valued and real-analytic. In addition, as a consequence of the fold assumption, $\Gamma_I$ can be further decomposed as a union over two branches $\Gamma_I^+ \cup \Gamma_I^-,$ where these branches are (locally) characterized as follows: given any local smooth curve $\gamma^\pm(\alpha_0, \alpha) \subset \Gamma_I^{\pm}$ joining $\alpha_0 \in {\mathcal C}_{\Lambda}$ to $\alpha \in \Gamma_I^{\pm},$
$$ \pm  \int_{\gamma^{\pm} (\alpha_0,\alpha)} \Im \omega^{\C}  \geq 0.$$

 In view of (\ref{complexgraph}), there exist  locally well-defined functions $S^\pm: \pi(\Gamma_I^\pm) \to \C$ that are real-analytic away from $\partial \pi (\Lambda_{\R})$ with
  $$S^+(x) =  \int_{\gamma^+} \Im \omega^{\C}, \,\,\, \alpha= (x, d_x \psi(x)).$$
   We then define the {\em complex action function} locally to be 
 $$ S(x):=  \psi^+(x) \geq 0; \quad x \in \pi(\Gamma_I^-).$$

Our main result on the exponential decay of joint eigenfunctions is:

\begin{theo} \label{expdecay}
Suppose that $P(h) = (P_1(h),...,P_n(h))$ is  a QCI system of real-analytic, jointly elliptic, $h$-differential operators and $E \in {\mathcal P}(T^*M)$ a regular level  of the moment map.
Suppose, in addition, that the caustic ${\mathcal C}_{\Lambda}$ is a fold. Then, there exists an $h$-indepedent neighbourhood, $V \supset \pi(\Lambda_{\R}),$ such that for any open $\Omega \Subset ( \,V \setminus \pi(\Lambda_{\R}) \, )$ and any $\epsilon >0,$ { there exists $h_0(\epsilon,\Omega)>0$ such that for $h \in (0,h_0(\epsilon,\Omega)],$} and $u_{h}$ a joint eigenfunction of $P(h)$ with energy $E$,
$$ \sup_{x \in \Omega} | e^{ (1-\epsilon) S(x)/h} \, u_h(x) | = O_{\epsilon} (e^{\beta(\epsilon)/h}),$$
where $\beta(\epsilon) = O(\epsilon^{1/2})$ as $\epsilon \to 0^+.$
\end{theo}

As we show in section \ref{examples}, under the real-analyticity assumption the decay estimate in Theorem \ref{expdecay} is sharp and improves on results of the second author in \cite{To98}.
Moreover, the fold assumption is satisfied for generic joint energy levels when $n \geq 2.$  In the cases where there exist appropriate coordinates in terms of which the classical generating function is separable, one can show  that the decay estimates in Theorem \ref{expdecay} are still satisfied for non-generic energy levels $E \in {\mathcal B}_{reg}$. The latter condition is satisfied in all cases that we know of (see remark \ref{nonfold} for more details)

\medskip
\noindent {\sc Acknowledgements.} 
J.G. is grateful to the National Science Foundation for support under the Mathematical Sciences Postdoctoral Research Fellowship  DMS-1502661. J.T. was partially supported by NSERC Discovery Grant \# OGP0170280 and  by the French National Research Agency project Gerasic-ANR-
13-BS01-0007-0.

\section{Sup bounds for QCI eigenfunctions: proof of Theorem \ref{QCI}}

\begin{proof} We assume first that $n=2$ and that $P_1(h) = - h^2 \Delta_g$, {$E_1=1$} and indicate the minor changes in the case where $P_1(h) = - h^2 \Delta_g + V(x),$ at the end.  Since we assume the QCI condition, instead of working with long-time propagators, it simplifies the analysis to use small-time joint propagators. We will also assume without loss of generality that $E_1=0$ (replacing $P_1$ by $P_1-1$). Suppose $P_1(h) u_h =0$ and with $Q(h):= p_2^w(h) - E(h)$ we have $Q(h) u_h = 0.$ As usual, we let $\rho \in S(\R)$ with $\rho(0) = 1$ and with $\epsilon >0$ small we choose supp $\, \hat{\rho} \subset [\epsilon, 2 \epsilon].$ 

 Then,  since $[P_1,Q] = 0,$ for any $x \in M,$ we can write 

$$  u_h(x) = \int_{\R} \int_{\R} \Big(  e^{i t P_1(h)/h} e^{is Q(h)/h}  \, u_h \Big) \, \hat{\rho}(t) \, \hat{\rho}_1(s) \, ds dt$$
Let $\chi \in C^{\infty}_0(\R; [0,1])$ with $\chi \equiv 1$ on $[-\e,\e]$ and $\supp \chi\subset[-2\e,2\e]$ and set $\chi(h) = \chi(P_1(h)).$ Since
 $$(1-\chi(h)) u_h = 0$$
and by construction $[\chi, P_1] = 0$ and $[\chi, Q] = 0,$ we can $h$-microlocalize the identity above and write
\begin{equation} \label{qci1}
u_h(x) = \int_{\R} \int_{\R} \Big(  e^{i t P_1(h)/h} \chi(h) e^{is Q(h)/h} \chi(h)  \, u_h \Big) \, \hat{\rho}(t) \, \hat{\rho}(s) \, dt ds + O(h^{\infty}). \end{equation}
By a standard stationary phase argument (see e.g.~\cite[Section 3.1]{GT},~\cite[Theorem 4]{BGT},~\cite[Lemma 5.1.3]{SoggeBook}),  we can write the Schwartz kernelof $\int_{\R} \hat{\rho}(t) e^{itP_1(h)/h} \chi(h) \, dt$ in the form

\begin{equation}\label{sogge}
K_1(x,y,h) = (2\pi h)^{\frac{1-n}{2}} e^{ir(x,z)/h} \hat{\rho}(r(x,y)) a(x,y,h) + O_{C^{\infty}}(h^\infty) \end{equation}

where $a(x,y,h) \sim \sum_{j=0}^\infty a_j(x,y) h^j, \,\,\, a_j \in C^{\infty}$ and $r(\cdot,\cdot)$ denotes geodesic distance in the metric $g.$ Thus, letting $r_{inj}=\textup{inj}(M)$ and choosing geodesic normal coordinates, $y: B_{r_{inj}}(x) \to \R^n$ centered at $x \in M,$ we have that the phase 
$$r(x,y) = |x-y|.$$
The microlocalized propagator, $U(s;h) := e^{is Q(h)/h} \chi(h) $ has Schwartz kernel that is an $h$-FIO of the form
\begin{equation} \label{fio1}
U(s,y,z;h) = (2\pi h)^{-n} \int_{\R^n} e^{i [ S(s,y,\eta) - \langle z, \eta \rangle ]/h} \, b(s,y,z,\eta;h) \, d\eta + O_{C^{\infty}}(h^{\infty}),
\end{equation}
where $a \in S^0$ with $b \sim_{h \to 0^+} \sum_{j=0}^{\infty} b_j h^{j}$ and where $S(s,y,\eta)$ solves the eikonal equation
$$ \partial_sS = q(y,\partial_y S), \quad S(0,z,\eta) = \langle z, \eta \rangle.$$
%

Then, in view of (\ref{sogge}) and (\ref{fio1}), and  with 

$$K(x,z):= \Big( \, \int e^{itP_1/h} \chi(h) e^{isQ/h} (h) \hat{\rho}(t)\hat{\rho}(s)dsdt \, \Big)(x,z),$$
we have that
\begin{equation} \label{compositekernel}
K(x,z)= (2\pi h)^{\frac{1-n}{2}-n}\int e^{\frac{i}{h}(|x-y|+S(s,y,\eta)-\langle z,\eta\rangle)}\hat{\rho}(|x-y|)c(x,y,h)\hat{\rho}(s)dsdyd\eta
\end{equation}
where, $c(x,z,h) \sim \sum_{j=0}^{\infty} c_j(x,z) h^j.$ and
$$
\partial_s S(s,y,\eta)=q(y,\partial_y S(s,y,\eta)),\qquad S(0,y,\eta)=\langle y,\eta\rangle.
$$
Performing stationary phase in $(y,\eta)$ gives that at the critical point $(y_c(x,z,s),\eta_c(x,z,s))$,
$$
\begin{gathered}
\frac{y_c-x}{|y_c-x|}+\partial_yS(s,y_c,\eta_c)=0\\
\partial_\eta S(s,y_c,\eta_c)-z=0
\end{gathered}
$$
Let
$$
\Phi(x,z,s)=|x-y_c(x,z,s)|+S(s,y_c(x,z,s),\eta_c(x,z,s))-\langle z,\eta_c(x,z,s)\rangle
$$
so that 
$$
K(x,z)= (2\pi h)^{\frac{1-n}{2}}\int e^{\frac{i}{h}\Phi(x,z,s)}\tilde{c}(x,z,s)ds.
$$
Then, by Cauchy--Schwarz,
\begin{align*}
|u_h(x)|^2&=\Big|\int e^{\frac{i}{h}\Phi(x,z,s)}\tilde{c}(x,z,s) u_h(z) \, ds dz\Big|^2\\
&\leq \Big( \int \Big| \int e^{\frac{i}{h}\Phi(x,z,s)}\tilde{c}(x,z,s)ds\Big|^2 dz \Big) \cdot \|u_h\|^2_{L^2}.
\end{align*}
Now, we observe that 
$$
(2\pi h)^{1-n}\int \Big| \int e^{\frac{i}{h}\Phi(x,z,s)}\tilde{c}(x,z,s)ds\Big|^2dz =(2\pi h)^{1-n}\int e^{\frac{i}{h}(\Phi(x,z,s)-\Phi(x,z,t))}\tilde{c}(x,z,s)\overline{\tilde{c}(x,z,t)}dsdtdz
$$
and also note that 
$$y_c(x,z,0)=z,\qquad \eta_c(x,z,0)=\frac{x-z}{|x-z|}$$
and compute

\begin{align*}
\partial_s\Phi&=\frac{\langle x-y_c,-\partial_{s}y_c\rangle}{|x-y_c|}+\partial_sS+\langle \partial_yS,\partial_s y_c\rangle+\langle \partial_\eta S,\partial_s \eta_c\rangle-\langle z,\partial_s\eta_c\rangle\\
&=\frac{\langle x-y_c,-\partial_{s}y_c\rangle}{|x-y_c|}+q(y_c,\partial_{y}S)+\frac{\langle x-y_c,\partial_s y_c\rangle}{|x-y_c|}+\langle z,\partial_s \eta_c\rangle-\langle z,\partial_s\eta_c\rangle\\
&=q\Big(y_c,\frac{x-y_c}{|x-y_c|}\Big)
\end{align*}
Therefore,
$$\Phi(x,z,s) =\int_0^s q\Big(y_c(x,z,r),\frac{x-y_c(x,z,r)}{|x-y_c(x,z,r)|}\Big)dr + q(z,\frac{x-z}{|x-z|}).$$
and
$$
\Phi(x,z,s)-\Phi(x,z,t)=\int_{t}^sq\Big(y_c(x,z,r),\frac{x-y_c(x,z,r)}{|x-y_c(x,z,r)|}\Big)dr.
$$
In particular, 
$$
\Phi(x,z,s)-\Phi(x,z,t)=(s-t)q(z,\frac{x-z}{|x-z|})+(s^2f(x,z,s)-t^2f(x,z,t))
$$
Therefore, changing variables to $S=t-s$ $T=t+s$, 
\begin{equation} \label{integral}
|u_h(x)|^2\leq \|u_h\|^2  \cdot  (2\pi h)^{1-n} \int e^{\frac{iS}{h} \big[ q(z,\frac{x-z}{|x-z|})+O_{C^\infty}(T) \big]}c_1(x,z,S,T)dSdTdz.
\end{equation}

We split the integral into two pieces
$$
(2\pi h)^{1 - n}\int e^{\frac{iS}{h}(q(z,\frac{x-z}{|x-z|})+O_{C^\infty}(T))}\chi(Sh^{-1})c_1(x,z,S,T)dSdTdz\leq Ch^{2-n}
$$
and 
\begin{equation} \label{second}
(2\pi h)^{1- n}\int e^{\frac{iS}{h}(q(z,\frac{x-z}{|x-z|})+O_{C^\infty}(T))}(1-\chi(Sh^{-1}))c_1(x,z,S,T)dSdTdz.
\end{equation}
  First, note that since $H_pq=0$, $q(z,\frac{x-z}{|x-z|})=q(x,\frac{x-z}{|x-z|})$. Therefore,  the Morse assumption on $q|_{S^*_xM}$ allows us to perform stationary phase in $z$ with $hS^{-1}$ as a small parameter in the second integral  (\ref{second}). The result is that the latter integral is 
$$
\leq C h^{1-n} h^{(n-1)/2} \int |S^{(1-n)/2}(1-\chi(Sh^{-1}))\chi(T)|dSdT\leq C h^{(1-n)/2} \, \int_{h}^{1} S^{(1-n)/2} \, dS.
$$

Summarizing, we have proved that
\begin{equation} \label{upshot1}
\begin{aligned}
|u_h(x)|^2 &\leq C \, h^{1-n} \, \Big(   h^{\frac{n-1}{2}}\int_{h}^{1} S^{(1-n)/2} \, dS  + h \Big)\\
&\leq \left\{\begin{aligned}h^{\frac{1}{2}}&&n=2\\h^{-1}\log h^{-1}&&n=3\\h^{2-n}&&n> 3\end{aligned}\right.
\end{aligned} \end{equation}
Taking square roots completes the proof in the case where $P_1(h) = -h^2 \Delta_g,$ and $E_1=1$.

\subsubsection{Schr\"{o}dinger case} To treat the more general Schr\"{o}dinger case, we simply note that (see e.g. \cite{CHT}) in analogy with the homogeneous case in (\ref{sogge}), 
$$K_1(x,y) = (2 \pi h)^{(1-n)/2} e^{i r_E(x,y)/h}  \, { \hat{\rho}(r_E(x,y)) } \, a(x,y,h) + O_{C^\infty}(h^{\infty})$$
where $r_E(x,y)$ is Riemannian distance in the Jacobi metric $g_E = (E -V)_{+} g$ which is non-singular in the allowable region $ \{ V < E \};$ in particular, $r_E(x,y)$ locally satisfies the eikonal equation
$$ | d_z r_E(x,y)|_{g_E}^2 = 1; \quad  x \in \overline{\Omega},\,\, \epsilon < r_E(x,y) < 2 \epsilon,$$
with $\epsilon>0$ fixed sufficiently small. Consequently, using geodesic normal coordinates in $g_E$ centered at $x \in \overline{\Omega},$ it follows that the composite kernel $K(x,z)$ has exactly the same form as in (\ref{compositekernel}). The rest of the argument follows in the same way as in the homogeneous case.
 \end{proof}

 \subsection{Geometric implications of the Morse condition}
 
The morse assumption, Definition~\ref{morseassumption}, may at first seem artificial. However, we observe in section~\ref{examples} that it is satisfied in many examples and, moreover, it implies a purely geometric condition which is natural. In particular, for the QCI system $\hat{\mathcal{P}}$ and $x_0\in M$, there are $n$ natural submanifolds for $L^\infty$ norms:
$$
\Sigma^{E_i}_{x_0,i}:=p_i^{-1}(E_i)\cap T^*_{x_0}M,\qquad i=1,\dots n.
$$
Because we work with only two propagators, we consider $\Sigma^E_{x_0}=\Sigma^{E_1}_{x_0,1}\cap \Sigma^{E_2}_{x_0,2}$. The Morse condition does \emph{not} guarantee that $\Sigma_{x_0,1}\cap\Sigma_{x_0,2}$ is a transverse intersection (inside $T^*_xM$) indeed, not even that the intersection is clean. However, it does ensure that for \emph{every} energy $E_2$, the volume of $\Sigma^E_{x_0}$ small. More precisely (in dimension $n\neq 3$) it ensures that for every $E_2$, 
 $$
\Sigma_h:= \textup{Vol}\big(\{ \rho\in \Sigma^{E_1}_{x_0,1}\mid d(\rho, \Sigma^E_{x_0})<Ch\big)\leq C(h^{\frac{n-1}{2}} +h)
 $$
 Because $P_1u=E_1u$ and $P_2u=E_2u$, we can see that $u$ is localized in an $h$ neighborhood of $\{p_1=E_1,\,p_2=E_2\}$ and thus $\Sigma_h$ is the only region on which $u$ can have energy producing large $L^\infty$ norm at $x_0$ This volume localization then gives improved $L^\infty$ norms. 
 
The philosophy that volume concentration over $\Sigma_{x_0,1}^{E_1}$, implies improved $L^\infty$ norms can be made rigorous~\cite{CG18}. In future work~\cite{GT18}, we will use the ideas there to use directly the volume of the set $\Sigma_h$ to obtain a Hardy type bound for QCI eigenfunctions under a morse type assumption on the system.

%
%
%

\section{Exponential decay estimate for joint eigenfunctions in the microlocally forbidden region}
In this section, to prove our eigenfunction decay estimates, we will assume that $(M,g)$ is real-analytic and the QCI system $P_1(x, hD_x),...,P_n(x,hD_x)$ consists of analytic $h$-differential operators.  To formulate and prove our results, we will now recall some basic complex geometry and $h$-analytic microlocal machinery that will be used later on.

\subsection{Complex geometry} In this section, we require $M$ be a compact, closed, real-analytic manifold of dimension $n$  and $\widetilde{M}$ denote a Grauert tube complex thickening of $M$ with $M$ a totally real submanifold. By the Bruhat-Whitney theorem, $\widetilde{M}$ can be identified with $M^{\C}_{\tau} := \{ (\alpha_x, \alpha_\xi) \in T^*M; \sqrt{\rho}(\alpha_x,\alpha_\xi) \leq \tau \}$ where $\sqrt{2\rho} = |\alpha_{\xi}|_g$ is the exhaustion function $M^{\C}_{\tau}$, and we identify $\widetilde M$ with $M_{\tau}^{\C}$ using the complexified geodesic exponential map $ \kappa : M_{\tau}^{\C} \rightarrow \tilde{M}$ with $\kappa(\alpha) = \exp_{\alpha_x,\C}( i \alpha_{\xi})$
 Viewed on $\widetilde{M}$, the function $\sqrt{\rho}(\alpha) = \frac{-i}{2\sqrt{2}} r_{\C}(\alpha,\bar{\alpha}),$ which satisfies homogeneous Monge-Ampere and its level sets exhaust the complex thickening $\widetilde{M}$ (see \cite{GS1} for further details). 

\smallskip

\smallskip

 We consider a complexification of $T^*M$ of the form
 \begin{equation}
 \label{complex}
 \widetilde{T^*M}:= \{ \alpha; |\Im \alpha_x | < \tau, \,\, |\Im \alpha_{\xi}| \leq \frac{1}{C} \langle \alpha_\xi \rangle \}
 \end{equation}
 where $C \gg 1$ is a sufficiently large constant and  $T^*M \subset \widetilde{T^*M}$ is then a totally-real submanifold invariant under the involution $\alpha \mapsto \bar{\alpha}.$
 
 One has a natural complex symplectic form on $\widetilde{T^*M}$ given by
 $$\Omega^{\C} = d\alpha_x \wedge d\alpha_{\xi}, \quad (\alpha_x,\alpha_\xi) \in \widetilde{T^*M}.$$
 
 Given the complex symplectic form, $\Omega^{\C}$, there are some natural Lagrangian submanifolds of   $\widetilde{T^*M}$  that are of particular interest to us: First, there is the {\em $\C$-Lagrangian submanifold } 
 $$ \tilde{ \Lambda}: =  {\mathcal P}_{\C}^{-1}(E) , \quad E \in {\mathcal B}_{reg},$$
 where $  {\mathcal P}_{\C}  = (p_{1}^{\C},...,p_{n}^{\C})$ and $p_{j}^{\C}$ denotes the holomorphic continuation of $p_j$ to $\widetilde{T^*M}.$ When the context is clear, in the following we will sometimes simply write $p$ for the holomorphic continuation ${\mathcal P}_{\C}.$  The level set
 $${\mathcal P}^{-1}(E) \subset {\mathcal P}_{\C}^{-1}(E), \quad E \in {\mathcal B}_{reg}$$
 is an $\R$-Lagrangian submanifold and, as we have already pointed out, by the Liouville-Arnold theoerem, it is a finite union of $\R$-Lagrangian tori.
 
 We recall that a complex $n$-dimensional submanifold, $\Lambda_I,$ of $\widetilde{T^*M}$ is said to be {\em I-Lagrangian} if it is Lagrangian with respect to
$$ \Im \Omega^{\C} = \ical  \, d\alpha_x \wedge d \alpha_{\xi} = d\rcal \alpha_x \wedge d \ical \alpha_{\xi} + d \ical \alpha_x \wedge d \rcal \alpha_{\xi}, $$
where $\Omega^{\C} = d\alpha_{x} \wedge d \alpha_{\xi}$ is the complex symplectic form on $\widetilde{T^*M}$. We will denote the correponding complex canonical one form by
$$\omega^{\C} = \alpha_{\xi} d\alpha_x; \quad (\alpha_x,\alpha_{\xi}) \in \widetilde{T^*M}.$$

There are several examples of $I$-Lagrangians that will be of particular interest to us; these include, graphs over the real cotangent bundle $T^*M$ of the form
$$ \Lambda_I = \{  \alpha + i H_{G}(\alpha), \,\,\alpha  \in T^*M \}$$
where $H_G$ is the Hamilton vector field of a real-valued $G \in C^{\infty}_{0}(T^*M; \R).$

\subsection{Complex symplectic geometry near caustics of fold type}
\label{s:fold}
There is a natural $I$-isotropic associated with the integrable system ${\mathcal P} = (p_1,...,p_n)$ and the associated $\C$-Lagrangian $\tilde{\Lambda}.$ To define it we let $T^*M \otimes \C := \widetilde{T^*M}_M,$ the complexification of $T^*M$ in the {\em fibre} $\alpha_{\xi}$-variables only and set
 \begin{equation} \label{iso}
  \tilde{\Gamma}_I := \Lambda_{\C} \cap \Big( T^*M \otimes \C \, \Big).\end{equation}
  
  We will now consider the  case where $\pi: \Lambda_{\R} \to M$ has {\em fold} singularities. As we will show below, in such a case, one can describe the structure of $\tilde{\Gamma}_I$ in detail locally near the projection of the caustic set. 
\begin{defn} \label{caustic}
We define the {\em caustic set} to be the subset of the real Lagrangian $\Lambda_{\R}$ given by
$${\mathcal C}_{\Lambda}:= \{ \alpha \in \Lambda_{\R}; \, \rankr \,(d_{\alpha_{\xi}} p_1(\alpha),...,d_{\alpha_{\xi} } p_n(\alpha) ) < n \}.$$
In addition, we say that the caustic ${\mathcal C}_{\lambda}$ is of {\em fold} type if the projection $\pi_{\Lambda_{\R}}: \Lambda_{\R} \to M$ has fold singularities along $\mathcal{C}_{\Lambda}.$

\end{defn}

It follows from an implicit function theorem argument that, under the fold assumption on the caustic set, $\pi(\Lambda_\R) $ is a real $n$-dimensional stratified subset of $M$ with boundary, and moreover,

 $$ \partial \pi( \Lambda_{\R}) \subset \pi ({\mathcal C}_{\Lambda}).  $$\

To see this, we need only show that if $\alpha\in \Lambda_{\R}$ and $\rankr\,(d_{\alpha_{\xi}} p_1(\alpha),...,d_{\alpha_{\xi} } p_n(\alpha) ) =n$, then $\pi(\Lambda_{\R})$ contains a neighborhood of $\pi(\alpha)$. For this, observe that $H_{p_i}$, $i=1,\dots n$ are tangent to $\Lambda_{\R}$. In particular, the rank condition implies that $d\pi H_{p_i}$, $i=1,\dots n$ are linearly independent and hence $\pi:\Lambda_{\R}\to M$ is a local diffeomorphism.

\begin{rem} \label{generic}
In general, ${\mathcal C}_{\Lambda}$ is a stratified space. Under the fold assumption in (i),  one has a decomposition of the form
$ {\mathcal C}_{\Lambda} = \cup_{k=1}^N H_k,$\
where the $H_{k}$ are closed hypersurfaces (of real dimension $n-1$). We note that the fold assumption above is  generically satisfied in all of the QCI examples that we are aware of.
\end{rem}

 Under the fold type assumption on ${\mathcal C}_{\Lambda}$, one can locally characterize the structure of $\tilde{\Gamma}_I$ near the caustic set. To motivate the general result, it is useful to consider first the simple case of the harmonic oscillator.
 
\subsubsection{Harmonic oscillator} Consider the one-dimensional harmonic oscillator  with  $p^{\C}(x,\zeta) = \zeta^2 + x^2, \,\,\, (x,\zeta) \in \R \times \C$ \ and $E>0.$ 
In this case, letting $z \to \sqrt{z}$ denote the principal square root function with branch cut along the negative imaginary axis, we  have
$$ \tilde{\Gamma}_I = \Gamma_I \sqcup \Lambda_{\R},$$
where
$$ \Lambda_{\R} = \{ (x,\xi) \in \R \times \R; |x| \leq \sqrt{E}, \xi = \pm \sqrt{E - x^2} \},$$
which is a single ellipse, and
$$\Gamma_I= \{ (x,\zeta) \in \R \times \C; |x| > \sqrt{E},  \zeta = \pm i \, \sqrt{ x^2 - E} \}.$$
The latter set clearly has $4$ connected components. See Figure~\ref{f:hOsc} for a picture of these sets.\\

\begin{figure}
\vspace{-2cm}
\includegraphics[width=.75\textwidth]{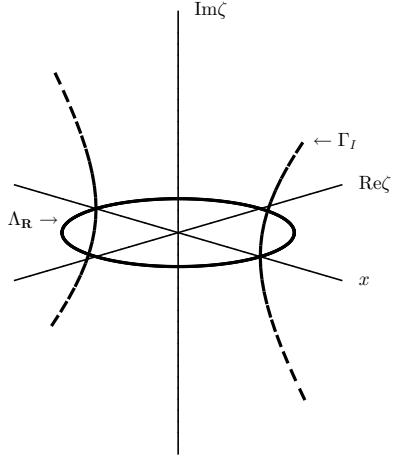}

\vspace{-2cm}
\caption{\label{f:hOsc} $\Lambda_{\R}$ and $\Gamma_I$ in the case of the harmonic oscillator}
\end{figure}

\begin{prop} \label{ilag} 
Assume that $(p_1,\dots p_n)$ are jointly elliptic and that the aperture constant $C$ in~\eqref{complex} is sufficiently large. Then
$ \tilde{ \Lambda} \cap  \Big( T^*M \otimes \C \Big)$ is compact and moreover, under the assumption that the caustic ${\mathcal C}_{\Lambda}$ is of fold type, there exists a neighbourhood $U$ of the caustic in $\tilde{\Gamma}_I$ such that
$$ (i) \,\,\, \tilde{\Gamma}_I \cap U =  ( \, \Lambda_{\R} \sqcup \Gamma_I \,) \cap U,$$
where $\Lambda_{\R} = \{ \alpha \in T^*M; {\mathcal P}(\alpha) = 0 \}$ and $\Gamma_I \subset \tilde{\Gamma}_I.$ Here,  both $\Lambda_{\R}$ and $\Gamma_I$ are $I$-isotropic submanifolds of the complex Lagrangian $\tilde{\Lambda}$ with respect to the complex symplectic form $\Omega^{\C}.$ 

In addition, $ \Gamma_I$ is locally a (complex) canonical graph with
$$ (ii) \,\,\,( \Gamma_I)_{U} = \{ (\alpha_x, d_{\alpha_x} \psi_{U}(\alpha_x) ); \,\,\, \alpha_x \in \pi(U)  \},$$
where $\psi_{U}: \pi(U) \to \C$ is a complex-valued, real-analytic function.
\end{prop}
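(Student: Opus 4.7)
The plan is to establish the three claims of Proposition \ref{ilag} in order: compactness, the decomposition near the caustic, and the local graph structure, with the middle step being the technical heart.

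For compactness, I would argue directly from joint ellipticity. Since $(p_1,\dots,p_n)$ are jointly elliptic, there exist constants $c,R>0$ with $\sum_j |p_j(x,\xi)|\geq c\langle\xi\rangle^{m}$ for $|\xi|\geq R$. The $p_j^{\C}$ depend holomorphically on $\alpha_\xi$, and for $|\Im\alpha_\xi|\leq \frac{1}{C}\langle\alpha_\xi\rangle$ with $C$ sufficiently large, a Taylor expansion in the imaginary fiber direction yields $\sum_j |p_j^{\C}(\alpha_x,\alpha_\xi)|\geq \tfrac{c}{2}\langle\alpha_\xi\rangle^m$ uniformly for $\alpha_x\in M$ (here we use compactness of $M$ and analyticity). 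Since on $\tilde\Lambda\cap (T^*M\otimes\C)$ we have $p_j^{\C}=E_j$ bounded, this forces $|\alpha_\xi|$ to be globally bounded, and combined with $\alpha_x\in M$ (compact) gives compactness of $\tilde\Lambda\cap (T^*M\otimes\C)$.

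For the local decomposition near the caustic, fix $\alpha_0\in\mathcal{C}_\Lambda$ with $\pi(\alpha_0)=x_0$. By the fold hypothesis on $\pi_{\Lambda_{\R}}$, the rank of $(d_{\alpha_\xi}p_1,\dots,d_{\alpha_\xi}p_n)$ drops by exactly one at $\alpha_0$, so after a linear change in the $p_j$'s (and a corresponding choice $Q=f(P_1,\dots,P_n)$ of the kind used in Definition \ref{morseassumption}), I can arrange that $d_{\alpha_\xi}p_1(\alpha_0)=0$ while $d_{\alpha_\xi}p_2,\dots,d_{\alpha_\xi}p_n$ remain linearly independent at $\alpha_0$. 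By the implicit function theorem applied to $(p_2,\dots,p_n)=(E_2,\dots,E_n)$, one may locally solve for $n-1$ of the fiber coordinates as real-analytic functions of the remaining one (call it $\xi_1$) and of $\alpha_x$. Substituting into $p_1=E_1$ then reduces the problem to a single real-analytic equation $F(\alpha_x,\xi_1)=0$ with $\partial_{\xi_1}F(\alpha_0)=0$. The fold hypothesis translates precisely into $\partial^2_{\xi_1}F(\alpha_0)\neq 0$ together with a transversality condition on $d_{\alpha_x}F$. The Malgrange--Weierstrass preparation theorem (in its real-analytic form) then puts $F$ into the normal form $\xi_1^2 - V(\alpha_x)$ up to a nonvanishing analytic factor, with $V$ real-analytic and $dV\neq 0$ at $x_0$. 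In this model, the real solutions $\xi_1=\pm\sqrt{V(\alpha_x)}$ on $\{V\geq 0\}$ yield the two real branches of $\Lambda_\R$ meeting at the caustic $\{V=0\}$, while the purely imaginary continuations $\xi_1=\pm i\sqrt{-V(\alpha_x)}$ on $\{V\leq 0\}$ yield two branches of $\Gamma_I$. This is exactly the harmonic-oscillator picture reviewed in Figure \ref{f:hOsc}, and it gives the decomposition $\tilde\Gamma_I\cap U=(\Lambda_\R\sqcup\Gamma_I)\cap U$ on a neighborhood $U$ of the caustic in $\tilde\Gamma_I$.

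For the graph property in (ii), observe that off the caustic each branch of $\Gamma_I$ is a real $n$-dimensional submanifold of $T^*M\otimes\C$ whose projection to $M$ is a local diffeomorphism (by the dimension count $\dim_\R\tilde\Gamma_I=n$ together with the fact that the normal form above shows $d\pi|_{\Gamma_I}$ is injective wherever $V<0$). Hence $\Gamma_I$ is the graph of a real-analytic section $\alpha_x\mapsto(\alpha_x,\eta(\alpha_x))$ with $\eta$ taking values in the complexified fibers. Because $\tilde\Lambda$ is $\C$-Lagrangian, $\Omega^{\C}|_{\tilde\Lambda}=0$; pulling back along this graph shows that the complex-valued one-form $\eta\,d\alpha_x$ is closed. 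By the complex Poincar\'e lemma on the simply connected neighborhood $\pi(U)$, $\eta\,d\alpha_x=d\psi_U$ for a real-analytic complex-valued $\psi_U$, establishing (ii). The $I$-isotropy claim in (i) is then immediate: $\Omega^{\C}|_{\tilde\Lambda}=0$ implies $\Im\Omega^{\C}|_{\tilde\Lambda}=0$, so \emph{any} submanifold of $\tilde\Lambda$, in particular $\Lambda_\R$ and $\Gamma_I$, is automatically $I$-isotropic.

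The main obstacle, and the step I would spend the most time on, is the normal-form analysis near the fold: verifying that the scalar reduction really produces a Morse-type function $\xi_1^2-V(\alpha_x)$ in the full $n$-dimensional setting, handling the stratification of $\mathcal{C}_\Lambda$ along the possibly several hypersurfaces $H_k$ of Remark \ref{generic}, and checking that the neighborhoods $U$ obtained pointwise can be patched to an $h$-independent neighborhood of all of $\mathcal{C}_\Lambda$. Compactness from step one is what guarantees such a patching is possible.
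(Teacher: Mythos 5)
Your proposal is correct in substance, but it reaches the local model near the caustic by a genuinely different route than the paper. Where you reduce the joint equations to a single scalar one --- eliminating $n-1$ (complexified) fiber variables by the implicit function theorem applied to $p_2=\dots=p_n$ after a linear change making $d_{\alpha_\xi}p_1(\alpha_0)=0$, then invoking Weierstrass preparation to get $\xi_1^2-V(\alpha_x)$ --- the paper instead applies H\"ormander's normal form for fold maps to $\pi_{\Lambda_{\R}}$ directly, uses the Lagrangian property of $\Lambda_{\R}$ to show $(x',\xi_n)$ are coordinates on $\Lambda_{\R}$, derives the normal form \eqref{normalform2}, and then substitutes $x_n=z^2$ and solves for $\zeta_n^{\pm}(x',z)$ by the analytic implicit function theorem as in \eqref{e:zeta}. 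The two routes yield the same picture (two real branches where $V\geq 0$, two non-real branches where $V<0$); yours is somewhat more self-contained, while the paper's keeps explicit track of the coefficient $a_2$, which is what feeds the asymptotics \eqref{asymptotics}. Similarly for (ii): you obtain $\psi_U$ abstractly, noting that the pullback of $\alpha_\xi\,d\alpha_x$ to the graph is closed because $\tilde{\Lambda}$ is $\C$-Lagrangian and then applying the Poincar\'e lemma, whereas the paper constructs $\psi^{\pm}$ as an explicit action integral along normal curves (Lemma \ref{eikonallemma}); your argument proves the proposition as stated, but the explicit construction is what later yields the $(-x_n)_+^{3/2}$ behaviour and well-definedness of $S=\Im\psi$ used in the proof of Theorem \ref{expdecay}. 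Your remark that $I$-isotropy of $\Lambda_{\R}$ and $\Gamma_I$ is automatic because $\Im\Omega^{\C}$ already vanishes on $\tilde{\Lambda}$ is a clean shortcut that the paper leaves implicit.

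Two small points to tighten, both routine. First, the elimination and the preparation theorem must be carried out with the fiber variables complexified (holomorphic in $\zeta_1,\zeta'$, real-analytic in $\alpha_x$); the real-analytic implicit function theorem as you state it only accounts for the real points of $\tilde{\Gamma}_I$, and you need the holomorphic version to know that \emph{all} solutions with real base near $\alpha_0$ lie on the graph $\zeta'=\Xi'(\alpha_x,\zeta_1)$ and are zeros of the degree-two Weierstrass polynomial, which is what makes the decomposition in (i) exhaustive. Second, the nonvanishing of $d_{\alpha_x}F(\alpha_0)$ comes from $E$ being a regular value of the moment map (so $\partial_{\alpha_x}p_1(\alpha_0)\neq 0$ once $\partial_{\alpha_\xi}p_1(\alpha_0)=0$), not from the fold hypothesis itself; the fold hypothesis is what gives $\partial^2_{\xi_1}F(\alpha_0)\neq 0$.
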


\begin{rem} Here, $\Lambda_{\R}$ is, of course, also $\R$-Lagrangian with  respect to the {\em real} symptlectic form $\Omega$ on the real cotangent bundle $T^*M.$\end{rem}\

\begin{proof}

The fact that $\Lambda_{\C} \cap \Big( T^*M \otimes \C \Big)$ is compact follows readily from the joint ellipticity of the $p_j$'s. Indeed, since
$$ \Lambda_{\C} \cap (T^*M \otimes \C ) \subset \{ \alpha \in T^*M \otimes \C;  \sum_j |p_j(\alpha)|^2 = \sum_j E_j^2 \},$$
and by joint ellipticity, for all $\alpha \in T^*M,$
\begin{equation} \label{elliptic}
 \sum_j |p_j(\alpha)|^2 \geq \frac{1}{C'} |\alpha_{\xi}|^{2m},\end{equation}
it follows by Taylor expansion along $T^*M \subset T^*M \otimes \C$ and the fact that the $p_j$'s are symbols of $h$-differential operators (i.e. they are polynomials in the $\alpha_{\xi}$'s) that for $\alpha \in T^*M \otimes \C,$
\begin{equation} \label{taylor}
\sum_j |p_j(\alpha)|^2 = \sum_j |p_j(\alpha_x, \Re \alpha_{\xi})|^2 +  {\mathcal O}( |\Im \alpha_{\xi}| |\alpha_\xi|^{2m-1})).\end{equation}
Since $|\Im \alpha_{\xi}| \leq \frac{1}{C} |\Re \alpha_{\xi}|,$ and in view of (\ref{elliptic}), it follows that for aperture constant $C\gg 1$ sufficiently large, the second term on the RHS of (\ref{taylor}) can be absorbed in the first; the end result is that
\begin{equation} \label{ellipticcomplex}
 \sum_j |p_j(\alpha)|^2 \geq \frac{1}{C''} |\alpha_{\xi}|^{2m}, \quad \alpha \in T^*M \otimes \C
  \end{equation}
for some $m \in \Z^+.$ Thus, $ \tilde{\Lambda} \cap (T^*M \otimes \C)$ is clearly bounded since $M$ is compact and since it is also closed, compactness follows. 

To prove the remaining results (i) and (ii) in Proposition \ref{ilag}, we will use the fold assumption and argue in several steps.

Fix a point $q\in H_k\subset \mc{C}_{\Lambda}$. Then, by assumption $\pi_{\Lambda_{\R}}$ has a fold singularity and by \cite[Theorem C.4.2]{HOV3}, there are coordinates $y$ on $\Lambda_{\R}$ and $x$ on $M$ so that $y(q)=0$ and
\begin{equation}
\label{e:coord}
x(\pi(y))=(y_1,\dots y_{n-1},y_n^2).
\end{equation}
and in particular, locally, $H_k=\{y_n=0\}$. Now, since $\pi(x,\xi)=x$ for $(x,\xi)$ canonical coordinates on $T^*M$, we have that $x_i(y)=y_i$ for $i=1,\dots n-1$.  

Clearly, $\partial_{y_n}x_n|_{y=0}=0$ and, since $\Lambda_{\R}$ is Lagrangian,
  $$
 \sigma(\partial_{x_i}, \partial_{y_n})(q) = \sigma(\partial_{x_i},\sum_j\partial_{y_n}x_j(0)\partial_{x_j}+\partial_{y_n}\xi_j(0)\partial_{\xi_j})=0,\qquad i=1,\dots n-1.
$$ 
That is, $\partial_{y_n}\xi_i=0$, $i=1,\dots n-1$. Since $\partial_{y_1},\dots \partial_{y_n}$ are linearly independent, this implies that $\partial_{y_n}\xi_n|_{y=0}\neq 0$. 

Then, since the map $\kappa:(y_1,...,y_n) \mapsto (x'(y),\xi_n(y))$ satisfies rank $d\kappa = n,$,  by the implicit function theorem, $y_n=y_n(\xi_n,x')$ where $x=(x',x_n)$. Letting $b(x')=\xi_n|_{y_n=0}$, we can write using the implicit function theorem once again, 
$$y_n=\tilde{a}(x',\xi_n)(\xi_n-b(x'))$$
with $\tilde{a}(0)\neq 0$.

Therefore, we may choose coordinates $x$ on $M$ so that locally in canonical coordinates $(x,\xi)$,
\begin{equation} \label{normalform}
\pi_{\Lambda_{\R}}(x(x',\xi_n),\xi(x',\xi_n)) = (\,x',  a(x',\xi_n) \, ( \xi_n - b(x') )^2 \, );  \quad x = (x',x_n).
\end{equation}\
Here, $a \in C^{\omega}_{loc}(\R^n), \,\, a>0$ and $b \in C^{\omega}_{\loc}(\R^{n-1}).$

In this case, the caustic hypersurface is

$$ H_k = \{(x',\xi_n) \in \Lambda_\R; \quad \xi_n = b(x') \}.$$\

We note that under the projection $\pi_{\Lambda_{\R}},$ the hypersurface $H_k$ can naturally be identified with the hypersurface $\{ (x',x_n = 0) \in U \} \subset M$.  Henceforth, we abuse notation somewhat, and  denote the latter also by $H_k.$

Write 
$$a_2(x',\eta_n)=a(x',\eta_n+b(x')),$$ 
then the normal form (\ref{normalform}) can be rewritten in the form
\begin{equation} \label{normalform2}
\pi_{\Lambda_{\R}}(x(x',\xi_n),\xi(x',\xi_n)) = (\,x',  a_2(x',\xi_n - b(x')) \, ( \xi_n - b(x') )^2 \, );  \quad 0<  a_2 \in C^{\omega}_{loc}.
\end{equation}\

Next, we make a change of coordinates which will change the smooth structure near the caustic, but leave it unchanged away from the caustic. In particular, let $x_n=z^2$, $z\in \C$ so that 
$$z^2(x(x',\xi_n),\xi(x',\xi_n))=a_2(x',\xi_n - b(x')) \, ( \xi_n - b(x') )^2.$$
Note that when we want to return to the $x_n$ coordinates, we will write $\sqrt{x_n}=z$ where $\sqrt{x_n}>0$ for $x_n>0$ and the branch cut is taken on $-i[0,\infty)$. 
Then we have
$$
z=\pm\sqrt{a_2(x',\zeta_n-b(x'))}(\zeta_n-b(x')).
$$
and by the analytic implicit function theorem,   
\begin{equation}
\label{e:zeta}
\zeta_n^{\pm}=\zeta_n^{\pm}(x',z),\qquad z\in \mathbb{C}\text{ near }0.
\end{equation}
 Moreover, 
$$\pm\partial_{z}\zeta_n^{\pm}|_{z=0}=\frac{1}{\sqrt{a_2(x',0)}}>0.$$

 
  A simple computation using~\eqref{normalform2}, or more precisely its analytic continuation using $z$ as a coordinate, shows that
  $\pi_{\Lambda}: \Gamma_{I} \to M$ is locally surjective onto $M$ near the caustic hypersurface $H_k$. That is, there exists $W_k$ a neighborhood of $H_k$ in $\tilde{\Gamma}_I$ and $V_k$ a neighborhood of $\pi_{\Lambda}(H_k)$ so that 
  $$
  \pi_\Lambda:W_k\to V_k
  $$ 
  is surjective and, moreover, with $\Omega_k:=W_k\setminus H_k$, 
  \begin{equation} \label{fiber rank}
  \rankc \, ( \, d_{\zeta} p_1(x,\zeta),..., d_{\zeta} p_n(x,\zeta) \, ) = n, \quad (x,\zeta)  \in \Omega_k. \end{equation}
 To see this, we analytically continue~\eqref{e:coord}. In particular, analytically continuing $y\in \Lambda_{\R}$ to $\alpha\in\Lambda$, 
$$\alpha_x(\pi(\alpha))=(\alpha_1,\dots \alpha_{n-1},\alpha_n^2).$$
Hence, 
$$ \rankc d\pi_{\Lambda}=n,\qquad \alpha_n\neq 0.$$
Thus, $d\pi_\Lambda$ is surjective which implies that $\{d\pi H_{p_i}\}_{i=1}^n= d_\zeta p$ has rank $n$.

  
    We also note that $\pi |_{\Lambda}:\Omega_k \to M$ can be written as a graph over the base manifold $M$ locally near the caustic hypersurface $H_k$ up to choice of branch; more precisely, we have for some $\delta>0$
    
  \begin{eqnarray} \label{model2}
\Omega_k = \Omega_k^+ \cup  \Omega_k^{-}, \hspace{5cm}   &,  \\ \nonumber 
\Omega_k^{\pm} := \{ (x',z^2; \zeta' = \partial_{x'} \psi_U, \zeta_n =  \zeta_n^{\pm}(x',z) \, ) ; \quad z\in (0,\delta)\bigcup  i(0,\delta)  \}.
\end{eqnarray}
\begin{rem}
Note that $z^2\in\R $ for $z\in (0,\delta)\bigcup  i(0,\delta) $
\end{rem}

To complete the proof of Proposition \ref{ilag}, we will need the following result on solving a particular initial value problem for the complex eikonal equation associated with local branches $\Omega_k^{\pm}$ of the $I$-isotropic manifold $\Omega_k.$

\subsubsection{Complex generating functions}
In this section, we construct a  generating function  $\psi^{\pm}$ of $\Omega_k^{\pm}$ locally near the caustic hypersurface $H_k.$

Specifically, we seek to solve the complex eikonal boundary value problem 
\begin{align} 
p_j^{\C}(\alpha_x, \partial_{\alpha_x} \psi) &= E_j, \,\, j=1,...,n; \quad (\alpha_x,\partial_{\alpha_x} \psi) \in \Omega^{\pm}_k, \nonumber \\
 S |_{H_k} = 0; &\quad S = \Im \psi.  \label{eikonal} \end{align}
 
 \noindent In practice, we will not be able to find a unique solution $\psi$ on all of $\Omega_k$. However, for all such solutions, we will see that $S=\Im \psi$ agrees and hence that $S$ is well defined {on $\Omega_k.$}

\begin{lem} \label{eikonallemma}
Under the fold assumption on the real Lagrangian $\Lambda_\R$ (which is also $I$-isotropic), there exists 
{$S^{\pm} \in C^{1,1/2}_{loc}( \overline{\Omega_k^{\pm} }) \cap C^{\omega} ( \Omega_k^{\pm})$ so that $S=\Im \psi^{\pm}$ for any solution $\psi^{\pm}$ to} the complex eikonal  boundary value problem in (\ref{eikonal}). In addition, with $S^{\pm}=\Im \psi^{\pm}$,
\begin{equation}
\label{asymptotics}
S^{\pm}(x)=\pm\frac{2}{3\sqrt{a_2(x')}}(-x_n)_+^{3/2}+O(x_n^2).
\end{equation}
\end{lem}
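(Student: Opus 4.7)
\emph{Plan.} I would build $\psi^\pm$ as a primitive of the complex canonical one-form $\omega^\C$ on each branch $\Omega_k^\pm$ of the $I$-isotropic manifold, using the fold normal-form coordinates $(x',z)$ of~\eqref{normalform2} in which $x_n = z^2$, $\zeta_n = \zeta_n^\pm(x',z)$, and $\pm\partial_z\zeta_n^\pm|_{z=0} = 1/\sqrt{a_2(x',0)}>0$. Since $\Omega_k^\pm\subset\tilde\Lambda$ and $\tilde\Lambda$ is $\C$-Lagrangian for $\Omega^\C = d\omega^\C$, the pullback of $\omega^\C$ to $\Omega_k^\pm$ is closed, so on a small neighbourhood of a basepoint $(x_0',0)\in H_k$ I set
$$
\psi^\pm(x',z) \;:=\; \int_{(x_0',0)}^{(x',z)} \omega^\C \;=\; \int_\gamma\bigl(\zeta'(x',z)\,dx' + 2z\,\zeta_n^\pm(x',z)\,dz\bigr),
$$
which is real-analytic in $(x',z)$ and path-independent. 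By construction $d\psi^\pm = \zeta\,dx$ along $\Omega_k^\pm\subset\mathcal{P}_\C^{-1}(E)$, so the eikonal system $p_j^\C(x,\partial_x\psi^\pm)=E_j$ is satisfied automatically.

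\emph{Boundary condition and uniqueness of $S^\pm$.} The slice $\{z=0\}$ of $\Omega_k^\pm$ sits inside the caustic $H_k\subset\Lambda_\R$, where $\zeta$ is real-valued; hence $\psi^\pm(\cdot,0)$ is real-valued and $S^\pm|_{H_k}=0$. If $\wt\psi^\pm$ is any other solution of~\eqref{eikonal}, then $d\wt\psi^\pm = \zeta\,dx$ on $\Omega_k^\pm$ (both differentials must represent the same cotangent coordinates), so $\wt\psi^\pm - \psi^\pm$ is locally constant; the boundary condition forces this constant to be real, giving uniqueness of $S^\pm = \Im\psi^\pm$ independent of the particular primitive chosen.

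\emph{Asymptotics and regularity.} Taylor expanding $\zeta_n^\pm(x',t)=b(x')\pm t/\sqrt{a_2(x',0)}+O(t^2)$ and integrating $2t\,\zeta_n^\pm$ from $0$ to $z$ at fixed $x'$ yields
$$
\psi^\pm(x',z) - \psi^\pm(x',0) \;=\; b(x')\,z^2 \;\pm\; \frac{2\,z^3}{3\sqrt{a_2(x',0)}} \;+\; O(z^4),
$$
so one can split the analytic function as $\psi^\pm - \psi^\pm(x',0) = A(x',z^2) + z^3 B(x',z^2)$, with $A,B$ real-analytic and $B(x',0) = \pm\tfrac{2}{3}a_2(x',0)^{-1/2}$. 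On the real branch $z=+\sqrt{x_n}\in\R$ both terms are real so $S^\pm\equiv 0$; on the imaginary branch $z=i\sqrt{-x_n}$ one has $z^2 = x_n$ and $z^3 = -i(-x_n)^{3/2}$, hence
$$
S^\pm(x',x_n) \;=\; \pm\frac{2}{3\sqrt{a_2(x',0)}}(-x_n)_+^{3/2} \;+\; O(x_n^2),
$$
up to the labelling convention for $\pm$. The function $(-x_n)_+^{3/2}$ is $C^{1,1/2}$ but not $C^2$ across $H_k$, and multiplication by a real-analytic factor preserves this regularity, so $S^\pm\in C^{1,1/2}_{\loc}(\overline{\Omega_k^\pm})\cap C^\omega(\Omega_k^\pm)$.

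\emph{Main obstacle.} The structural point that makes everything work is that the odd-in-$z$ part of $\psi^\pm$ is divisible by $z^3$ rather than merely by $z$, which is precisely what produces the $(-x_n)^{3/2}$ exponent and the $C^{1,1/2}$ (not just $C^{0,1/2}$) regularity. This divisibility is forced by the fold structure: the pullback $2z\zeta_n^\pm(x',z)\,dz$ has linear-in-$z$ part $2b(x')z\,dz$, whose antiderivative $b(x')z^2 = b(x')x_n$ is already even, so the odd contribution to $\psi^\pm$ must begin at order $z^3$. Verifying that this argument is coordinate-invariant, and that it glues across the two values $z=\pm\sqrt{x_n}$ and $z=\pm i\sqrt{-x_n}$ into a single well-defined function on $\pi(\Omega_k^\pm)\subset M$ of the claimed Hölder class, is the place where one must be careful, but it follows directly from the analyticity of $\psi^\pm$ in $(x',z)$ together with the even/odd decomposition above.
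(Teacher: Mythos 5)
Your proposal is correct and follows essentially the same route as the paper: you define $\psi^{\pm}$ by integrating $\omega^{\C}$ along paths from the caustic inside the branch (the paper uses the specific normal curve $t\mapsto(x',tx_n)$, you use an arbitrary path plus closedness of $\omega^{\C}|_{\tilde\Lambda}$, which is equivalent), and you obtain the eikonal equation, the boundary condition, and the expansion \eqref{asymptotics} from the same Taylor expansion of $\zeta_n^{\pm}$ in $z$. Your even/odd-in-$z$ decomposition showing divisibility of the odd part by $z^3$ is just a slightly more explicit justification of the $C^{1,1/2}_{loc}$ regularity and of the uniqueness of $S^{\pm}=\Im\psi^{\pm}$ that the paper asserts directly, with the sign ambiguity being only a branch-labelling convention.
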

\begin{proof}

To solve the eikonal problem, we follow the standard method of (complex) bicharacteristics. Since the caustic hypersurface $H_k$ is characteristic for the joint flow of Hamilton vector fields of $p_j^{\C}; j=1,...,n$, one cannot expect a smooth solution to (\ref{eikonal}). Nevertheless, it is still possible to solve (\ref{eikonal}), albeit with reduced regularity at $H_k.$
In normal coordinates $(x,\xi + i \eta)$, given an initial point $(x',\xi';0) \in H_k$ and $(x, \zeta) \in \Omega^{\pm}_k,$ we consider the ``normal'' curve joining these points given by
$$\gamma(t)=(x',tx_n;(\zeta')^{\pm}(x',\sqrt{tx_n}),\zeta^{\pm}_n(x',\sqrt{tx_n})),\quad t\in [0,1]$$
When $(x,\zeta) \in \Omega_k^{\pm},$ we write $\gamma^{\pm}$ for $\gamma$ to specify the branch. 
Let

\begin{equation}
\label{actionfn}
\begin{aligned}
\psi^{\pm}_k(x)&:=\int_{\gamma^{\pm}}\omega^{\C}=\int_{\gamma^{\pm}}\zeta dx=\int_0^1\zeta^{\pm}_n(x',\sqrt{tx_n})d(tx_n)=\int_0^{x_n}\zeta^{\pm}_n(x',\sqrt{x_n})dx_n
\end{aligned}
\end{equation}
Let 
$$S^{\pm}_k(x)=\int_{\gamma^{\pm}} \Im \omega^{\C} = \Im\int_0^{x_n}\zeta^{\pm}_n(x',\sqrt{x_n})dx_n $$
Now, $\pm \partial_{z}\zeta^{\pm}_n(x',s)|_{s=0}=\frac{1}{\sqrt{a_2(x')}}$, so 
$$
\zeta^{\pm}_n(x',z)=b(x')\pm\frac{z}{\sqrt{a_2(x')}}+O(z^2).
$$
In particular,
$$
S^{\pm}_k(x)=\pm\frac{2}{3\sqrt{a_2(x')}}(-x_n)_+^{3/2}+O(x_n^2).
$$

The fact that $\psi_{k}^{\pm}$ solves (\ref{eikonal}) on $\Omega_k^{\pm}$ respectively is clear from the definition above since from (\ref{model2}) $\Omega_{k}^\pm$ is locally a graph over $U_k$ with $\Omega_k^\pm = \{(x,\zeta); \zeta = \partial_{x} \psi_k^{\pm}(x) ) \}.$ 
 Here, of course, the function $\psi_U^{\pm} \equiv \psi_k^{\pm}$ for $x \in \Omega_k.$
Finally, from the formula in (\ref{actionfn}) it is clear that
$\psi_k^{\pm}, S_{k}^{\pm} \in C^{1,1/2}(\bar{\Omega_k}) \cap C^{\omega}(\Omega_k),$
since $H_k = \partial \Omega_k^{\pm} = \{ x \in U_k; x_n = 0 \}.$

We now show that the definition of  $S^{\pm}_k$ above is intrinsically defined in the sense that: (i) it is independent of choice of initial point on $H_k$ and (ii) it is independent of the choice of curve of integration in the same smooth homotopy class.

Indeed, to prove (i), we recall that $\zeta=\xi+i\eta$ and note that $\eta |_{H_k} = 0,$ so that if $\alpha_0, \alpha_{1} \in H_k$ and $\gamma(\alpha_0,\alpha_1) \subset H_k$ is a $C^1$-curve joining these points, then using that $H_k\subset T^*M$,
$$\int_{\gamma(\alpha_0,\alpha_1)} \Im \omega^{\C} = \int_{\gamma(\alpha_0,\alpha_1)} \eta \, dx = 0.$$

As for (ii), let $\gamma_1(\alpha_0,\alpha) \subset \Omega_k \cap \Omega_l$ and $\gamma_2(\alpha_0,\alpha) \in \Omega_k \cap \Omega_l$ be two homotopic smooth curves joining $\alpha_0 \in H_k$ to $\alpha \in \Omega_k \cap \Omega_l$. Then, since $\Omega_k \subset \Gamma_I$ is $I$-isotropic and $\Omega_k\cap \Omega_l\subset \widetilde{T^*M}_M$
it follows by Stokes formula that
$$ \int_{\gamma_1(\alpha_0,\alpha)} \eta \, dx = \int_{\gamma_2(\alpha_0,\alpha)} \eta \, dx.$$

\begin{rem}
Note that $\psi^{\pm}_k$ may depend on the choice of initial point in $H_k$, but  we have shown that $S_k^{\pm} = \Im\psi^{\pm}_k$ does not.
\end{rem}

\end{proof}

The fact that $\Gamma_I$ is  $I$-isotropic and  (i) and (ii)  clearly follow from Lemma \ref{eikonallemma} and that completes the proof of Proposition \ref{ilag}.

\end{proof}

\begin{defn} \label{d:actionfn}
From now on, we will refer to $S_k:= S_k^{+}$ as the {\em action function} corresponding to the caustic hypersurface $H_k.$ \end{defn}\

We extend $S_k$ to the entire caustic ${\mathcal C}_{\Lambda}$ be setting
$$ S_k (x) = 0, \quad x \in \pi({\mathcal C}_{\Lambda}),$$
so that, by definition, $S_k |_{H_{l}} = 0$ for all $l =1,...,N.$

\subsubsection{Action function corresponding to the entire caustic set ${\mathcal C}_\Lambda.$}

We now define the action function $S: \cup_k \pi(\Omega_k)  \to \R$ on the entire forbidden region $\cup_k \pi(\Omega_k).$ It remains to check that the $S_k$'s correponding to the different caustic hypersurfaces $H_k$ agree on overlaps. More precisely, we claim that 
\begin{equation} \label{overlaps}
S_k(\alpha) = S_l(\alpha), \quad \alpha \in \Omega_k \cap \Omega_l. \end{equation}\

The compatibility condition in (\ref{overlaps}) is readily checked: Let $\alpha_{0}^{k} \in H_k$ and $\alpha_{0}^{l} \in H_l$ and $\gamma(\alpha_0^k,\alpha_0^l) \subset H_k \cup H_l$ be a piecewise smooth curve inside the caustic joining $\alpha_0^k$ and $\alpha_0^l$  (which we recall is a {\em real} submanifold of $T^*M$).  Now let $\alpha \in \Omega_k \cap \Omega_l$ and $\gamma_1(\alpha_0^k,\alpha) \subset \Omega_k$ and $\gamma_2(\alpha_0^l,\alpha) \subset \Omega_l$ be two normal curves as above, Then, $ \gamma(\alpha_0^k,\alpha_0^l)  \cup \gamma_1(\alpha_0^k,\alpha) \cup \gamma_2(\alpha_0^l,\alpha)$ bounds a domain $\Omega_{kl} \subset \Omega_k \cap \Omega_l.$ Since $\Gamma_I$ is $I$-isotropic, it follows from Stokes formula that
\begin{equation} \label{stokes1}
 \int_{\gamma(\alpha_0^k,\alpha_0^l)} \eta \, dx + \int_{  \gamma_1(\alpha_0^k,\alpha)} \eta \, dx    -  \int_{\gamma_2(\alpha_0^l,\alpha)} \, \eta \, dx = 0. \end{equation}
However, since  ${\mathcal C}_{\Lambda} \subset T^*M$ so that $ \eta |_{ \gamma(\alpha_0^k,\alpha_0^l)  } =0,$ the first integral on the LHS of (\ref{stokes1}) vanishes and hence,
\begin{equation} \label{stokes2}
 \int_{ \gamma_1(\alpha_0^k,\alpha)} \eta \, dx = \int_{\gamma_2(\alpha_0^l,\alpha)} \, \eta \, dx.
 \end{equation}

 We now set

\begin{equation} \label{action}
S(\alpha_x):= \,S_k(\alpha_x); \quad \alpha_x \in \pi(\Omega_k). \end{equation}\

\noindent In view of the compatibility condition (\ref{overlaps}), the action function in (\ref{weight}) is well-defined. Also, from now on we denote the microlocally forbidden region by
$$ \Omega:= \cup_{k=1}^{N} \Omega_k.$$

\subsection{Analytic psdos and FBI transforms}
Let $U\subset T^*M$ be open. Following \cite{Sj}, we say that $a \in S^{m,k}_{cla}(U)$  provided $a \sim h^{-m} (a_0 + h a_1 + \dots)$ in the sense that
\begin{equation}
\label{scsymbol} 
\begin{gathered} 
\partial_{x}^{l_1} \partial_{\xi}^{l_2} \overline{\partial}_{(x,\xi)} a = O_{l_1, l_2}(1) e^{- \langle \xi \rangle/Ch}, \quad (x,\xi)\in U, \\ 
  \Big| \partial^\alpha \big (a - h^{-m} \sum_{0 \leq j \leq \langle \xi \rangle/C_0 h} h^{j} a_j \big)\Big| = O_{{\alpha}}(1) e^{- \langle \xi \rangle/C_1 h},\quad
 |a_j| \leq C_0 C^{j} \, j ! \, \langle \xi \rangle^{k-j},\qquad (x,\xi)\in U.
 \end{gathered}
 \end{equation}
We sometimes write $S^{m,k}_{cla}=S^{m,k}_{cla}(T^*M)$. 


We say that an operator $A(h)$ is a \emph{semiclassical analytic pseudodifferential operator of order $m,k$} if its kernel can be written as $A(x,y;h)=K_{1}(x,y;h)+R_{1}(x,y;h)$ where for all $\alpha,\beta$,
$$|\partial_x^\alpha \partial_y^\beta R_{1}(x,y,h)|\leq C_{\alpha\beta} e^{-c_{\alpha\beta}/h}, \,\,\, c_{\alpha \beta} >0,$$
and
$$K_{1}(x,y;h)=\frac{1}{(2\pi h)^{n}} \int e^{\frac{i}{h}\lan x-y,\xi \ran}a(x,\xi,h)\chi(|x-y|)d\xi$$
where $\chi\in \Cc(\re)$ is 1 near 0 and $a\in S^{m,k}_{cla}$. We say $A$ is $h$-elliptic if $|a_0(x,\xi)|>ch^{-m}\langle \xi\rangle^k$ where $a_0$ is from \eqref{scsymbol}. Recall also that $A$ is classically elliptic if there is $C>0$ so that if $|\xi|>C$, $|a_0(x,\xi)|>C^{-1}h^{-m}|\xi|^k$.
For more details on the calculus of analytic pseudodifferential operators, we refer the reader to \cite{SjA}.

 As in \cite{Sj},  given an $h$-elliptic, semiclassical analytic symbol $a \in S^{3n/4,n/4}_{cla}(M \times (0,h_0]),$  we consider an intrinsic FBI transform $T(h):C^{\infty}(M) \to C^{\infty}(T^*M)$ of the form
\begin{equation} \label{FBI}
T u(\alpha;h) = \int_{M} e^{i\phi(\alpha,y)/h}  a(\alpha,y,h)\chi( \alpha_x, y) u(y) \, dy \end{equation}
with $\alpha = (\alpha_x,\alpha_{\xi}) \in T^*M$  in the notation of \cite{Sj}.

\begin{rem}
The normalization $a\in S^{3n/4,n/4}_{cla}$ appears so that $T$ is $L^2$ bounded with uniform bounds as $h\to 0$~\cite{Sj}. 
\end{rem}

 The phase function is required to satisfy 
 \begin{equation}
 \label{e:Tform}
 \phi(\alpha,\alpha_x) = 0, \qquad \partial_y \phi(\alpha,\alpha_x) = - \alpha_{\xi},\qquad \Im (\partial_y^2 \phi)(\alpha,\alpha_x) \sim C |\langle \alpha_{\xi} \rangle| \, \Id.
 \end{equation}

Given $T(h) :C^{\infty}(M) \to C^{\infty}(T^*M)$ it follows by an analytic stationary phase argument \cite{Sj} that one can construct an operator $S(h): C^{\infty}(T^*M) \to C^{\infty}(M)$ of the form
\begin{equation} \label{left}
 S v(x;h) = \int_{T^*M} e^{-i  \, \overline{\phi(x,\alpha)}  /h} b(x,\alpha,h) v(\alpha) \, d\alpha \end{equation}
with $b \in S^{3n/4,n/4}_{cla}$ such $S(h)$  is a left-parametrix for $T(h)$ in the sense that
\begin{equation} \label{residual}
S(h) T(h) = \Id + R(h),\qquad\partial_{x}^{\alpha} \partial_{y}^{\beta} R(x,y,h) = O_{\alpha, \beta}(e^{-C/h}). \end{equation}

Henceforth, we  use  the invariantly-defined FBI transform
$T(h): C^{\infty}(M) \to C^{\infty}(T^*M)$ with phase function
\begin{equation} \label{FBIphase}
\phi(\alpha,y) =  \exp_{y}^{-1}(\alpha_x) \cdot \alpha_{\xi} +i \,\frac{\mu}{2} \, r^{2}(\alpha_{x},y) \langle \alpha_{\xi} /\mu\rangle. \end{equation}
Here, $\mu >0$ is a constant that will be chosen appropriately later, $r(\cdot,\cdot)$ is geodesic distance and 
$\chi(\alpha_x,y) = \chi_0(r(\alpha_x,y))$ where $\chi_0: \R \to [0,1]$ is an even cutoff with supp $\chi_0 \subset [-inj(M,g), inj(M,g)]$ and $\chi_0(r) =1$ when $|r| < \frac{1}{2} inj(M,g).$

In analogy with the above, when $\Lambda \subset \widetilde{T^*M}$ is an $I$-Lagrangian and with
$$ T_{\Lambda} u:= Tu |_{\Lambda},$$
one can also construct a left-parametrix 
$S_{\Lambda}(h): C^{\infty}(\Lambda) \to C^{\infty}(M)$ with the property that
\begin{equation} \label{inversion}
S_{\Lambda}(h) \cdot T_{\Lambda}(h) = Id + R_{\Lambda}(h) \end{equation}
where the Schwartz kernel of $R_{\Lambda}(h)$ satisfies the same exponential decay estimates as  $R(x,y,h)$ in (\ref{residual}).









%
%

\subsection{Weighted $L^2$-estimates along an $I$-Lagrangian}

First, given an analytic $h$-differential operator $P(x,hD) = \sum_{|\alpha | \leq k} a_{\alpha}(x) (h D_x)^{\alpha},$ an $I$-Lagrangian $\Lambda \subset \widetilde{T^*M}$ with generating function $H \in C^{\infty}(\Lambda;\R)$ satisfying
$$ dH  = \Im  \alpha_{\xi} d\alpha_x |_{\Lambda},$$ 
one has the following weighted $L^2$ estimate~\cite[Proposition 1.3]{Sj}

\begin{multline}
\label{ILAG}
 \langle e^{H/h} a  T_{\Lambda}(h) Q_1(h)   u_h,  e^{H/h} a T_{\Lambda}(h) Q_2(h)u_h \rangle_{L^2(\Lambda)} =  \langle q_1 |_{\Lambda} \, e^{H/h} a T_{\Lambda}(h) u_h, \, q_2 |_{\Lambda}\,e^{H/h}  a T_{\Lambda}(h) u_h \rangle_{L^2(\Lambda)}\\ \\
 +  O(h)  \| e^{H/h} T_{\Lambda}(h) u_h \|_{L^2(\Lambda)}^2, \quad a \in S^0(1). \\ \end{multline}

\noindent In (\ref{ILAG}), $q_i(\alpha) \in {\mathcal O}(\widetilde{T^*M})$ is the holomorphic continuation of the $h$-principal symbol of $Q_i(h)$ to $\widetilde{T^*M}$ and $q_i |_{\Lambda}$ is the restriction to the $I$-Lagrangian $\Lambda \subset \widetilde{T^*M}.$


For arbitrarily small but fixed $\epsilon>0$ and
$$\rho(x):= r(x,\pi(\Lambda_\R)),$$
 we let $\chi_{\epsilon} \in C^{\infty}(M;[0,1])$ be a cutoff with $\chi_{\epsilon}(x)=0$ when $r(x,\pi(\Lambda_\R)) \leq \epsilon/2$ and $\chi_{\epsilon}(x) = 1$ when $r(x,\pi(\Lambda_\R)) > \epsilon.$
 
 Let $\Omega$ be relatively open in $M$ with the property that $ \overline{\Omega}  \subset  M \setminus \pi(\Lambda_{\R})$ and $\overline{\Omega} \subset \{x; \rho(x) < \delta \}$ where $\delta >0$ will be subsequently  chosen sufficiently small independent of  $\epsilon >0.$ Let $\chi_{\Omega} \in C_{0}^{\infty}(M;[0,1])$ be a cutoff function with the property that
 $\chi_{\Omega}(x) = 1$ for $x \in \pi(\Lambda_{\R}) \cup \tilde{\Omega}$ and $\chi_{\Omega}(x) = 0$ for $x \in ( \pi(\Lambda_{\R}) \cup \Omega )^{c}$ where $\tilde{\Omega} \Subset \Omega$ is a small neighbourhood of projection $\pi(\Lambda_\R) \subset M.$
 

We assume here that the real Lagrangian $\Lambda_\R$ has a caustic set of fold type and then consider the particular {\em weight function} $H_{\epsilon} \in C^{\infty}(M;\R)$ given by

\begin{equation} \label{weight}
H_{\epsilon}(\alpha_x):= \, (1- \epsilon) \, S(\alpha_x) \cdot \chi_{\epsilon}(\alpha_x), \quad \alpha_x \in \Omega, \end{equation}\\
where $\psi^+: \Omega \to \C$ solves the complex eikonal equation in (\ref{eikonal}) and the branch is chosen so that $\Im \psi^+ = S.$ The associated  $I$-Lagrangian is

\begin{equation}
\label{e:lambdaE}
\Lambda_{\e}:= \{ (\alpha_x,  \, \alpha_{\xi} + i\partial_{\alpha_x} H_{\epsilon}(\alpha_x) ); \,\,\, \alpha \in T^*M \}.
\end{equation}\

Let $u_h \in C^{\infty}(M)$ be a joint eigenfunction (or exponential quasimode) of $P_j(h); j=1,...,n$ with $P_j(h) u_h = O(e^{-C/h})$ (nb: we have normalized the operators $P_j(h)$ here so that the joint eigenfunctions $u_h$ have joint eigenvalues all zero). An application of the weighted estimate (\ref{ILAG})  applied with $a = \chi_{\Omega}$, $Q_1=Q_2= P_j(h)$ and then summed over $j=1,..,n$ gives

\begin{multline}
\label{ILAG2}
 \langle q \,\, \chi_{\Omega} e^{H_{\epsilon}/h} T_{\Lambda_\e}(h) u_h, \,  \,\, \chi_{\Omega} e^{H_{\epsilon}/h}  T_{\Lambda_\e}(h) u_h \rangle_{L^2(\Lambda_\e)}\\ +  O(h)  \| \chi_{\Omega} e^{H_{\epsilon}/h} T_{\Lambda_\e}(h) u_h \|_{L^2(\Lambda_\e)}^2 = O(e^{-C/h}),  \end{multline}
where
\begin{equation} \label{taylor1}
q(\alpha) = \sum_{j=1}^n \big |p_j |_{\Lambda_\e}\big|^2 (\alpha) = \sum_{j=1}^n {|p_j (\alpha_x, \alpha_{\xi} + i \partial_{\alpha_x} H_{\epsilon}(\alpha_x) ) |^2. } \end{equation}\

Splitting the LHS of (\ref{ILAG2}) into pieces where $\rho > \epsilon$ and $\rho < \epsilon$ and noting that $ \Im H_{\epsilon}(\alpha_x)< c\e^{3/2}$ when $\rho(\alpha_x) < \epsilon$  and $\Im H_{\epsilon}(\alpha_x) = (1-\epsilon) \cdot S(\alpha_x)$ when $\rho(\alpha_x) > \epsilon$ gives with appropriate  $\beta(\epsilon) = O(\epsilon^{3/2}),$ 

\begin{equation}
\label{ILAG3}
\begin{aligned}
&\big\langle q \, {\bf 1}_{\rho > \epsilon}  \,\,\chi_{\Omega} e^{H_\e/h} T_{\Lambda_\e}(h) u_h, \,  \chi_{\Omega} e^{H_{\epsilon}/h}  T_{\Lambda_\e}(h) u_h \big\rangle_{L^2(\Lambda_\e)} + O(h)  \| e^{H_\e/h}  {\bf 1}_{\rho > \epsilon} \chi_{\Omega} T_{\Lambda_\e}(h) u_h \|_{L^2(\Lambda_\e)}^2  \\
  &\qquad\qquad\qquad\qquad\qquad\qquad\qquad= O(e^{\beta(\epsilon)/h}) \|  {\bf 1}_{\rho  \leq \epsilon}  \chi_{\Omega} T_{\Lambda_\e} u_h \|_{L^2(\Lambda_\e)}^2 + O(e^{-C/h}) \\ 
  &\qquad\qquad\qquad\qquad\qquad\qquad\qquad= O(e^{\beta(\epsilon)/h}) \| \chi_{\Omega} T_{\Lambda_\e} u_h \|_{L^2(\Lambda_\e)}^2 + O(e^{-C/h}). 
   \end{aligned}
\end{equation}

In the last line of (\ref{ILAG3}), we have used some elementary bounds on $S$; indeed,  from (\ref{asymptotics}) that as $\rho \to 0^+,$
\begin{equation*} 
S(x) = O(\rho(x)^{3/2}),  \end{equation*}\\
as $\rho \to 0^+,$ where  $\rho(\alpha_x) = d_g(\pi(\Lambda_{\R}), \alpha_x).$ We will also need
\begin{equation}\label{genbound}
\quad  \partial_x S(x) = O(\rho(x)^{1/2})
\end{equation}
From (\ref{genbound}) and the formula for $\Lambda_\e$ and $T_{\Lambda_\e}$~\eqref{e:lambdaE} and~\eqref{e:Tform} respectively, together with the fact that $T_{_{T^*M}}:L^2\to L^2$ is uniformly bounded in $h$, it follows that
$$ \|  {\bf 1}_{\rho \leq \epsilon} \chi_{\Omega} T_{\Lambda_\e} u_h \|_{L^2(\Lambda_\e)}^2  \leq C \sup_{\rho \leq \epsilon} e^{ 2 |\partial S(\rho)| /h}.$$

Thus, in view of (\ref{genbound}), the RHS of (\ref{ILAG3}) is $O(e^{\beta'(\epsilon)/h})$ where $\beta'(\epsilon) = O(\epsilon^{1/2})$ as $\epsilon \to 0^+$ and so, it follows from (\ref{ILAG3}) that

\begin{multline}
\label{ILAG3.5}
\big\langle q \, {\bf 1}_{\rho > \epsilon}  \,\,\chi_{\Omega} e^{H_\e/h} T_{\Lambda_\e}(h) u_h, \,  \chi_{\Omega} e^{H_{\epsilon}/h}  T_{\Lambda_\e}(h) u_h \big\rangle_{L^2(\Lambda_\e)} + O(h)  \| e^{H_\e/h}  {\bf 1}_{\rho > \epsilon} \chi_{\Omega} T_{\Lambda}(h) u_h \|_{L^2(\Lambda)}^2  \\
= O(e^{\beta'(\epsilon)/h}),
 \end{multline}
where $\beta'(\epsilon) = O(\epsilon^{1/2})$ as $\epsilon \to 0^+.$



 

We will need the following

\begin{lem} \label{elliptic}

Let $\Omega \subset M \setminus \pi(\Lambda)$ with  $\Omega \subset \{x: \e < \rho(x) <  \delta \}$. Then, under the fold assumption on ${\mathcal C}_{\Lambda},$ there exists a {\em fixed}  $\delta_0>0$ so that for $0<\e<\delta<\delta_0$ there exists $c>0$ so that 
$$ |q(\alpha)| \geq c \langle\alpha_\xi\rangle^{2m}>0, \quad \text{when} \,\, \alpha_x \in \Omega,$$
\end{lem}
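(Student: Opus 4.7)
The plan is to split the estimate by the size of $|\alpha_\xi|$: joint ellipticity~\eqref{ellipticcomplex} handles the large $|\alpha_\xi|$ regime immediately, while for bounded $|\alpha_\xi|$ I argue by continuity together with the strict positivity of $q$, which follows from the local structure of $\tilde{\Gamma}_I$ given in Proposition~\ref{ilag}.

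For the regime $|\alpha_\xi| > R$, I would first observe that on $\overline{\Omega}$ the bound~\eqref{genbound} gives $|\partial_{\alpha_x} S| = O(\rho^{1/2}) = O(\delta_0^{1/2})$, so the imaginary part of $\zeta := \alpha_\xi + i(1-\epsilon)\partial_{\alpha_x} S$ is bounded by a constant depending only on $\delta_0$. Hence for $|\alpha_\xi|$ exceeding some threshold $R = R(\delta_0)$, the point $(\alpha_x,\zeta)$ lies in the enlarged complexification~\eqref{complex}, i.e.\ satisfies the aperture condition $|\Im \zeta|\leq |\Re\zeta|/C$; then~\eqref{ellipticcomplex} immediately yields $q(\alpha) = \sum_j |p_j(\alpha_x,\zeta)|^2 \geq c|\zeta|^{2m} \geq c|\alpha_\xi|^{2m}$.

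For bounded $|\alpha_\xi|\leq R$, I would establish strict positivity of $q$ on the compact set $\overline{\Omega}\times\{|\alpha_\xi|\leq R\}$ and conclude by continuity. Suppose for contradiction $q(\alpha)=0$; then all $p_j(\alpha_x,\zeta)=0$, so $(\alpha_x,\zeta)\in \tilde{\Lambda}\cap (T^*M\otimes\C)=\tilde{\Gamma}_I$. By Proposition~\ref{ilag}(i), $(\alpha_x,\zeta)\in \Lambda_{\R}\sqcup \Gamma_I$. The first alternative is incompatible with $\alpha_x\in\Omega\subset \{\rho>\epsilon\}$. In the second, Proposition~\ref{ilag}(ii) writes $\zeta = \partial_{\alpha_x}\psi_U(\alpha_x)$ for a local complex generating function with $\Im \psi_U = \pm S$; matching imaginary parts against $\Im \zeta = (1-\epsilon)\partial_{\alpha_x} S$ and using $0<\epsilon<1$ forces $\partial_{\alpha_x}S(\alpha_x) = 0$. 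But on $\Omega$, the asymptotic~\eqref{asymptotics} gives $|\partial_{\alpha_x}S|\geq c\rho^{1/2}\geq c\epsilon^{1/2}>0$, a contradiction.

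The main obstacle I anticipate is making the lower bound $|\partial_{\alpha_x}S|\gtrsim \rho^{1/2}$ uniform across all of $\Omega$: the expansion~\eqref{asymptotics} is formulated in fold-normal coordinates attached to a single caustic hypersurface $H_k$, so I would shrink $\delta_0$ below the minimum size of the tubular neighborhoods of the (finitely many) $H_k$ in which~\eqref{asymptotics} is valid, and invoke the compatibility~\eqref{overlaps}---valid because $\Gamma_I$ is $I$-isotropic---to patch the local action functions $S_k$ into a global $S$ with the stated lower bound. Combining the two regimes then produces $q(\alpha)\geq c(\epsilon,\delta)\langle\alpha_\xi\rangle^{2m}$, as required.
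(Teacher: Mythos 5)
Your split by $|\alpha_\xi|$ is reasonable, and the large-frequency regime is fine in substance: it is essentially the paper's Case (ii), carried out there by Taylor expanding in the imaginary direction and using classical ellipticity of $q$. (One caveat: with the joint energies normalized to $0$, the bound \eqref{ellipticcomplex} cannot hold literally on all of $T^*M\otimes\C$ — otherwise $\tilde{\Gamma}_I$ would be empty — so it is only available for $|\alpha_\xi|$ large; since that is exactly where you invoke it, this is harmless.)

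The genuine gap is in the bounded regime. You apply Proposition~\ref{ilag} to an arbitrary zero $(\alpha_x,\zeta)$ of the complexified system with $\alpha_x\in\Omega$, but Proposition~\ref{ilag}(i)--(ii), and the identity $\Im\psi=\pm S$ from Lemma~\ref{eikonallemma} that you use to match imaginary parts, are established only on a neighbourhood $U$ of the caustic \emph{inside} $\tilde{\Gamma}_I$, i.e.\ near ${\mathcal C}_{\Lambda}$ in phase space. Controlling the base point ($\epsilon<\rho(\alpha_x)<\delta<\delta_0$) does not place the zero itself in $U$: a priori $\tilde{\Gamma}_I$ could contain points over $\Omega$ whose fiber is far from the caustic fibers, and for those the dichotomy $\Im\zeta=\pm\partial_{\alpha_x}S$ has not been proved. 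Your "main obstacle" paragraph addresses a different, more minor issue (uniform validity of \eqref{asymptotics} near each $H_k$ and patching of the $S_k$), not this one. The gap is fixable: on $\Lambda_\epsilon$ the hypothetical zero automatically has $|\Im\zeta|=(1-\epsilon)|\partial_{\alpha_x} S|=O(\delta^{1/2})$; since $\tilde{\Gamma}_I$ is compact and (by the rank argument following Definition~\ref{caustic}) every point of $\Lambda_{\R}$ lying over $\partial\pi(\Lambda_{\R})$ belongs to ${\mathcal C}_{\Lambda}$, a limiting argument shows that for $\delta_0$ small any point of $\tilde{\Gamma}_I$ with base in the collar $\{0<\rho\leq\delta\}$ and imaginary part $O(\delta^{1/2})$ must lie in $U$, after which your contradiction goes through. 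It is worth noting that the paper sidesteps any such global statement: for fibers at intermediate distance from $\Re\partial_{\alpha_x}\psi^+$ it never classifies complex zeros at all, but compares $q$ on $\Lambda_\epsilon$ with its values at nearby \emph{real} points, which are bounded below because $\overline{\Omega}\cap\pi(\Lambda_{\R})=\emptyset$, and absorbs the $O(\delta^{1/2})$ imaginary shift (its Case (iii)); only for fibers close to $\Re\partial_{\alpha_x}\psi^+$ does it use the $\zeta^\pm$ structure near the caustic together with the $(1-\epsilon)$ mismatch that you also exploit (its Case (i)).
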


\begin{proof}

We assume throughout that $\epsilon < \rho(\alpha_x) < \delta,$ so that, in particular the weight function $H(\alpha_x) = ( 1- \epsilon)  \,  {S}(\alpha_x)$.  {Since we may work locally, we let $\psi^+$ be a solution to~\eqref{eikonal} near $\alpha_x$ so that in particular, $\psi^+=\Re \psi^++iS.$ }

Case (i) $|\alpha_\xi -\Re \partial_{\alpha_x}\psi^+ |\ll1$: First, observe that in a neighborhood of the caustic $\mc{C}_\Lambda$, the \emph{only} solutions to 
$p_j(x,\zeta)=0$, $j=1,\dots n$ occur at $\zeta=\zeta^{\pm}(x',\sqrt{x_n})$ where
$$\zeta^{\pm}=(\zeta'(x',\zeta_n^{\pm}(x',\sqrt{x_n})),\zeta_n^{\pm}(x',\sqrt{x_n}))$$
and $\zeta_n^{\pm}$ is as in~\eqref{e:zeta}. Therefore, there is $\delta_0>0$ and $c=c(\delta_0)>0$ so that with
$$\Lambda_{\epsilon}(c(\delta_0)):=  \{ (\alpha_x,  \, \alpha_{\xi} +i \partial_{\alpha_x} H_{\epsilon}(\alpha_x) ); \,\,\,|\alpha_{\xi}- \Re \partial_{\alpha_x}\psi^+ | \leq c(\delta_0), \,\, \alpha_{\xi} \in T_{\alpha_x}^*M \},$$
and $\alpha\in \Lambda_{\epsilon}(c(\delta_0))$ with $\e<\rho(\alpha)<\delta<\delta_0$,
$$
|q(\alpha)|>c_{\e,\delta}>0.
$$

Case (ii) $|\alpha_\xi -\Re \partial_{\alpha_x}\psi^+ |\gg 1$: Since $p_j$, $j=1,\dots n$ are symbols of order   $m$, $\partial_\xi |p_j|^2 (x,\xi)|\leq C\langle \xi\rangle^{2m-1}$.  Moreover, $q=\sum_j p_j^2$ is classically elliptic. Therefore, $|q(x,\xi)|\geq c\langle \xi\rangle^{2m}-C$. Now, 
\begin{align*}
q(\alpha)&=\sum {|p_j}(\alpha_x, \alpha_\xi-\Re\partial_{\alpha_x}\psi^++\e\Re\partial_{\alpha_x}\psi^++(1-\e)\partial_{\alpha_x}\psi^+)|^2\\
&=\sum {|p_j}(\alpha_x,\alpha_\xi -\Re\partial_{\alpha_x}\psi^++\e\Re \partial_{\alpha_x}\psi^++(1-\e)(\Re \psi^++i\partial_{\alpha_x}S)|^2\\
&=\sum {|p_j}(\alpha_x,\alpha_\xi -\Re \partial_{\alpha_x}\psi^+)|^2\\
&\qquad +O(|\alpha_\xi|^{2m-1}(\|\partial_{\alpha_x} S\|_{L^\infty(\e<\rho<\delta)} + \|\partial_{\alpha_x}\Re\psi^+\|_{L^\infty(\e<\rho<\delta)})\\
&\geq c|\alpha_\xi-\Re\partial_{\alpha_x}\psi^+|^{2m}-C_\delta
\end{align*}
since $\|\partial_{\alpha_x} S\|_{L^\infty(\e<\rho<\delta)}+ \|\partial_{\alpha_x}\Re\psi^+\|_{L^\infty(\e<\rho<\delta)} <C_\delta$ In particular, there exists $C=C(\delta_0)>0$ so that if $|\alpha_\xi -\Re\partial_{\alpha_x}\psi^+|>C(\delta_0)$ and $\e<\rho(\alpha)<\delta<\delta_0$, then $|q|>c_{\delta_0}|\alpha_\xi|^{2m}$.

Case(iii): Assume $ c(\delta_0) \leq |\alpha_{\xi}-\Re \partial_{\alpha_x}\psi^+| \leq C(\delta_0).$ In this case, we let
$$ \Lambda_{\epsilon}(c(\delta_0), C(\delta_0)):= \{ (\alpha_x,  \, \alpha_{\xi} + i\partial_{\alpha_x} H_{\epsilon}(\alpha_x) ); \,\,\, c(\delta_0) \leq |\alpha_{\xi} -\Re \partial_{\alpha_x}\psi^+| \leq C(\delta_0), \,\, \alpha_{\xi} \in T_{\alpha_x}^*M \}.$$

To control $|q(\alpha)|$ on this set, let 
$$\tilde{\Lambda}(c(\delta_0),C(\delta_0))=\{(\alpha_x\,\alpha_\xi)\mid c(\delta_0)\leq |\alpha_\xi-\Re \partial_{\alpha_x}\psi^+|\leq C(\delta_0)\}$$
Note that since $\Omega\cap \pi(\Lambda_{\R})=\emptyset$, and $\tilde{\Lambda}(c(\delta_0),C(\delta_0))\cap \overline{\Omega}$  is compact, 
$$
\inf_{\alpha^0\in\tilde{\Lambda}(c(\delta_0),C(\delta_0))\cap \Omega} \sum |p_j(\alpha^0_x,\alpha^0_\xi)|^2>0.
$$
Then, for $\alpha\in \Omega\cap \Lambda_{\epsilon}(c(\delta_0),C(\delta_0)),$ there is $\alpha_0\in \Omega\cap \tilde{\Lambda}(c(\delta_0),C(\delta_0))$ so that  
$$q(\alpha)=\sum |p_j(\alpha^0_x,\alpha^0_\xi)|^2+O(\delta^{1/2}).$$
In particular, there is $\delta_1>0$ so that for all $0<\delta<\delta_1$, and $\alpha\in \Omega\cap \Lambda_{\epsilon}(c(\delta_0),C(\delta_0)),$
$$
|q(\alpha)|>c>0.
$$

%
%
%
%
%
%
\end{proof}


\subsection{Proof of Theorem \ref{expdecay}}
\begin{proof}
Without loss of generality, we assume here that supp $\, \chi_{\Omega} \subset \{ \rho < \delta \}.$ Then,
In view of Lemma~\ref{elliptic}, it follows from (\ref{ILAG3.5}) together with that fact that on $\supp \chi_\Omega$, $|(1-\e)S-H_\e|=O(\e^{3/2})$, that for $\epsilon >0$ sufficiently small and $h \in (0,h_0(\epsilon)],$

\begin{equation} \label{ILAG4}
\| e^{(1-\e)S/h} {\bf 1}_{\epsilon < \rho < \delta } \,  \chi_{\Omega} T_{\Lambda} u_h \|_{L^2(\Lambda)} = O(e^{\beta'(\epsilon)/h}) + O(e^{-C/h}),\end{equation}
where $\beta'(\epsilon) = O(\epsilon^{1/2})$ as $\epsilon \to 0^+.$

Thus, it follows that 

\begin{equation} \label{ILAGupshot}
\| e^{(1-\e)S/h}  \chi_{\Omega} T_{\Lambda} u_h \|_{L^2(\Lambda)} = O_{\epsilon} (e^{\beta(\epsilon)/h}), \quad \beta(\epsilon) = O(\epsilon^{1/2}).\end{equation}

\begin{rem} The argument as above works in semiclassical Sobolev norm in the same way, with
\begin{equation} \label{ILAG5}
\| e^{(1-\e)S/h}  \,  \chi_{\Omega} T_{\Lambda} u_h \|_{H^{m}_{h}(\Lambda)} = O_{m,\epsilon}(e^{\beta(\epsilon)/h}). \end{equation}
\end{rem}

\noindent In both (\ref{ILAG4}) and (\ref{ILAG5}) $\beta(\epsilon) = O(\epsilon^{1/2})$ as $\epsilon \to 0^+.$




Let $\psi\in C_c^\infty(\re^n)$ so that $|q|\geq c\langle \alpha_\xi\rangle^m$ on $\supp (1-\psi)(\alpha_\xi)$. Such a $\psi$ exists by Lemma~\ref{elliptic}.  Standard elliptic estimates for analytic pseudos (see e.g.~\cite[Proposition 2.2, Corollary 1.3]{GalkToth},~\cite[Theorem 4.22]{MarBook}) together with the fact that $P_iu=0$  shows that there exists $h_0(\mu)$ such that for $h \in (0,h_0(\mu) )$  such that
\begin{equation} \label{massestimate}
\|\chi_\Omega (1-\psi(\alpha_\xi)) T_{\Lambda} u\|_{L^2(T^*M)}=O(e^{-C/h}).
\end{equation}
Moreover, as we show in the appendix, the exponential rate constant $C>0$ can be chosen {\em uniformly} for all $\mu \geq \mu_0 >0,$ $h<h_0(\mu)$ where $\mu$ is the constant appears in the phase function in (\ref{FBIphase}) (see Proposition~\ref{p:unifElliptic}).

In particular, since $(|S|+|H_\e|+|\partial_{\alpha_x}H_\e|) \leq C\delta^{1/2}$, this implies that there is $\delta>0$ and $\mu_0>0$ so that for all $\mu>\mu_0$, 
\begin{equation}
\label{e:outside}
\|e^{(1-\e)S/h}\chi_\Omega(1-\psi(\alpha_\xi))T_{\Lambda}u\|\leq e^{-C/h}, \,\,\, C>0.
\end{equation}
We also note that 
$$
\|S_{\Lambda}\chi_{\Omega}\|_{L^2(\Lambda)\to L^2(M)}\leq Ce^{\sup_{\Omega} |\Im \partial_\alpha S|}\leq Ce^{ \delta^{1/2}/h}.
$$

Let $\chi_{1,\Omega}$ supported on $\chi_\Omega\equiv 1$ and $\chi_{2,\Omega}\equiv 1$ on $\supp \chi_{\Omega}$ with $\chi_{i,\Omega}\in C_c^\infty(\Omega)$. Then, as we show in the Appendix, there is $\delta>0$ so that for $\mu>\mu_0$, one can construct a left-parametrix $S_{\Lambda}: C^{\infty}_{0}(T^*M) \to C^{\infty}(M)$ with the property that for some uniform constant $C>0,$ 
\begin{equation}
\begin{aligned}
\label{inversion}
e^{ (1-\e)S/h} \chi_{1,\Omega} u_h &= e^{ (1-\e)S/h} \chi_{1,\Omega} S_{\Lambda} T_{\Lambda}u_h + O(e^{-1/Ch})   \\
&= e^{ (1-\e)S/h} \chi_{1,\Omega} S_{\Lambda} \chi_\Omega T_{\Lambda }u_h + O(e^{-1/Ch}) \\
&= e^{ (1-\e)S/h} \chi_{1,\Omega} S_{\Lambda} \psi(\alpha_\xi) \chi_\Omega T_{\Lambda}u_h \\
&\qquad\qquad\qquad +e^{ (1-\e)S/h} \chi_{1,\Omega} S_{\Lambda}(1- \psi(\alpha_\xi) )\chi_\Omega T_{\Lambda}u_h + O(e^{-1/Ch })   \\
&= \big( e^{ (1-\e)S/h}\chi_{1,\Omega} \,S_{\Lambda} \, e^{- (1-\e)S/h} \psi(\alpha_\xi)\chi_{2,\Omega}(\alpha_x)\big) \cdot \big( e^{  (1-\e)S/h}  \chi_\Omega T_{\Lambda} \big) u_h + O(e^{-1/Ch }). 
\end{aligned}
\end{equation}

Here, we recall the exponential constant $C>0$ in the remainder terms in (\ref{inversion}) does not depend on the constant $\mu>0$ in the phase function (\ref{FBIphase}) of the FBI transform which we now fix large enough, with
\begin{equation} \label{setmu}
\frac{\mu}{2} >  \| \partial^2 S \|_{L^\infty(\Omega)}:= \max_{x \in \Omega} | \partial_{x_i} \partial_{x_j} S(x)|. \end{equation}


Consequently from (\ref{ILAG4}), the  Cauchy Schwarz inequality and the last line of (\ref{inversion}) one gets that for $x \in \Omega,$ and any $\epsilon >0,$


\begin{equation} \label{upshot}
  | e^{ (1-\e)S/h}\chi_{1,\Omega} u_h(x) | \leq C_{\epsilon} e^{\beta(\epsilon)/h} \sup \| A_{\Lambda}(x,\cdot ;h)\|_{L^2(\Lambda)}  + O(e^{-C_1/h}), \quad \beta(\epsilon) = O(\epsilon^{1/2}). \end{equation}

Here, $A_{\Lambda}(x,\alpha;h)$ is the Schwartz kernel of the operator $A_{\Lambda}(h): C^{\infty}(\Lambda) \to C^{\infty}(M)$ where

\begin{equation} \label{composite}
A_{\Lambda}(h):=  e^{ (1-\e)S/h} \chi_{1,\Omega}\cdot S_{\Lambda}(h) \cdot e^{- (1-\e)S/h}\psi(\alpha_\xi)\chi_{2,\Omega}(\alpha_x). \end{equation}

Consequently, it remains to bound   $ \|   A_{\Lambda}(h) \|_{ L^2(\Lambda) \to L^\infty(M)}.$ We note that by Lemma~\ref{eikonallemma} under the fold assumption, we can find local coordinates $x = (x',x_n): \Omega \to \R^n$ in a neighbourhood, $\Omega$ of the caustic in terms of which
$$ S(x) = b(x',x_n) x_n^{3/2}; \quad 0< b \in C^{\omega}(\Omega).$$
 
 
By Taylor expansion,

$$ S(x) - S(\alpha_x) - \langle \partial S(\alpha_x), x- \alpha_x \rangle  \leq \| \partial^2 S \|_{\infty} |x-\alpha_x|^2,$$

It follows that for $x \in \Omega,$ and with appropriate $m>0,$ 

\begin{equation}
\begin{aligned}
\label{kernelbound}
& \int_{\Lambda} | A_{\Lambda}(x(y),\alpha;h) |^2 \,d\alpha \,   \\ \\
 &\qquad \leq C h^{-m}    \int_{T^*M} \Big|  e^{-2 i \phi^*(\alpha,y)/h} \, e^{[2 (1-\epsilon)S(x) - 2 (1-\epsilon)S(\alpha_{x} )    - 2(1-\epsilon) \langle \partial_{\alpha_x} S(\alpha_x), \, x - \alpha_x \rangle \,  ]/h} \, \Big| \,\\
 &\qquad\qquad\qquad\qquad\qquad \times \chi(r(\alpha_x,x)) \chi_{1,\Omega}(x)\chi_{2,\Omega}(\alpha_x)\,  \psi(\alpha_\xi) \, {\bf 1}_{\rho \geq \epsilon} (\alpha_x) d\alpha  \\ \\
&\qquad\leq C h^{-m}  \int_{T^*M}   e^{ \big( 2 \ical \phi^*(\alpha,y)  + \| \partial^2 S \|_{\infty} |x-\alpha_x|^2 \big) /h}  \, \chi(r(\alpha_x,x)) \, \chi_{1,\Omega}(x)\,\chi_{2,\Omega}(\alpha_x)\,  \psi(\alpha_\xi) {\bf 1}_{\rho \geq \epsilon} (\alpha_x) d\alpha \\ \\
 &\qquad\leq C h^{-m} \int_{T^*M}  e^{ \big( \, -\frac{\mu}{2} + \| \partial^2 S \|_{\infty} \, \big) \, |x-\alpha_x|^2 /h} \, \chi(r(\alpha_x,x))   \chi_{1,\Omega}(x)\,\chi_{2,\Omega}(\alpha_x)\,  \psi(\alpha_\xi) \, d\alpha = O(h^{-m + \frac{n}{2}})
  \end{aligned}
  \end{equation}

\noindent uniformly for $x \in \supp \chi_{1,\Omega}.$  The last line follows by an application of steepest descent under  the assumption (\ref{setmu}) on the constant $\mu >0$ in the phase function $\phi(\alpha,x).$    
   
    Thus, in particular, it follows that for any  $\Omega  \subset M \setminus \pi(\Lambda_{\R})$ sufficiently close to the caustic $\partial  \pi (\Lambda_{\R}),$

  
  




\begin{equation} \label{opbound}
 \|  A_{\Lambda}(h) \|_{L^2(\Lambda) \to L^{\infty}(M)} = O(h^{-m'}) \end{equation}
with some $m' >0.$ Thus, in view of (\ref{opbound}) and (\ref{upshot}), we have proved Theorem \ref{expdecay}. \end{proof}



 

\begin{rem} \label{nonfold} Many classical integrable systems (eg. geodesic flow on ellipsoids, Neumann oscillators on spheres, geodesic flow on Liouville tori),  have the feature that in terms of appropriate coordinates $x=(x_1,...,x_n) \in \prod_{j=1}^n (\alpha_j,\alpha_{j+1})$ with $\alpha_1 < \alpha_2< \cdots \alpha_n$ defined in a neighbourhood, $V,$ of $\pi(\Lambda_{\R})$ one can separate variables in the generating function $S_{V}: V \to \R$ with
$$p_j(x, d_x S_V(x)) = E_j, \quad S_V(x) = \sum_{j=1}^n S_V(x_j), \,\, x \in V.$$
Moreover, one can write each $S_V(x_j)$ as a hyperelliptic integral
$$S_V(x_j) = \int_{\alpha_j}^{x_j} \sqrt{ \frac{R_E(s)}{A(s)} } \, ds,$$ 
where $R_E$ is a polynomial of degree $n-1$ with  with coefficients that depend on the joint energy levels $E=(E_1,...,E_n) \in {\mathcal B}_{reg}$ When $n=2$ the roots of $R_E(s)$ are necessarily simple (since it is linear) and this is generically still  the case in higher dimensions as well.

 The proof of Theorem \ref{expdecay} holds in the (non-generic) case where $R_E(s)$ has multiple roots. Indeed, in the case where $R_E(s)$ has a root $r_k \in (\alpha_k, \alpha_{k+1})$ of mulitiplicity $2k+1$ corresponds to  a caustic hypersurface $H_k  = \{ x_k = r_k \}$ with $\Omega_k = \{ x_k  > r_k \}.$ The complex generating function near $H_k$ in the analogue of Lemma \ref{eikonallemma}
is then  locally of the form 
$$S(x) \sim a(x',x_k) (x_k - r_k)^{k + 3/2}; \quad a (x)>0, \,\, x \in \Omega_k.$$
Consequently, both $S |_{x_k = r_k} = 0$ and $d S |_{x_k = r_k} =0$ and
also $d S(x_k) \neq 0$ when $x_k > r_k,$ the reader can readily check that the analogue of Lemma \ref{elliptic} holds in this case also and the proof of Theorem \ref{expdecay} then follows in the same way as in the fold case where $k=0.$

 \end{rem}

\section{Examples}\label{examples}

We begin with some relatively simple examples of QCI systems in two dimensions:  Laplace eigenfunctions on convex surfaces of revolution and Liouville tori/spheres. In these special examples, one can justify separation of variables for the joint eigenfunction that allow us to verify the sharpness of both Theorems \ref{QCI} and \ref{expdecay}.

\subsection{Convex surfaces of revolution} Consider a convex surface of revolution generated by rotating a curve  $ \gamma = \{ (r, f(r)), \, r \in [-1,1] \}$ about $r$-axis with $f\in C^\infty ([-1,1], \R)$, $f(1)=f(-1)=0$, ${f^{(2k)}(1)=f^{(2k)}(-1)=0}$, where $k$ is a nonnegative integer and $f''(r)<0$ for all $r\in (-1,1)$. Moreover, we will assume that $f(r)$ has a single isolated critical point at $r=0$; in particular, $f'(0)=0$ and $f''(0)<0.$
 
 Let $M$ be the corresponding convex surface of revolution parametrized by 
 \begin{align*}
 &\beta: [-1,1]\times [0, 2\pi) \to \R^3, \\
 &\beta(r,\theta) = (r, f(r) \cos \theta, f(r) \sin \theta).
 \end{align*}
 
Consider $M$ endowed with the rotational Riemannian metric $g$ given by
 \[g=dr^2+f^2(r)d\theta^2,\]
where $w(r)=\sqrt{1+(f'(r))^2}$.

The corresponding $h$- Laplacian $P_1(h) := - h^2 \Delta_g $ with eigenvalue $E_1(h) =1$  is QCI with commuting quantum integral $P_2(h) = h D_{\theta}$ and since the eigenfunctions can be expanded in Fourier series in $\theta,$ the joint eigenfunctions  are necessarily of the form
$\phi_h(r,\theta)=v_h(r)\psi_h(\theta),$ where   $v_h(r)$ and $\psi_h(\theta)$ must satisfy the ODE
\begin{equation} \label{tang}
 h D_{\theta} \psi_h(\theta)= E_2(h) \psi_h(\theta); \quad E_2(h) = m h,
\end{equation}
and
\begin{equation} \label{reduced}
\big( \, h^2 D_r^2  +  f^{-2}(r)E_2^2(h) - 1\, \big)v_{h}(r)=0.
\end{equation}

At the classical level, $p_1(r,\theta;\xi_r, \xi_{\theta}) =  \xi_r^2 - f^{-2}(r) \xi_{\theta}^2 $ and $p_2(r,\theta;\xi_r,\xi_{\theta}) = \xi_{\theta}$ with
$$ \Lambda_{\R}(E) = \{ (r,\theta; \xi_r,\xi_{\theta});  \xi_r^2  = 1 - f^{-2}(r)  \xi_{\theta}^2, \quad \xi_{\theta} = E_2 \}.$$

\subsubsection{Sup bounds} Set $\Sigma_{r,\theta}:= \{ (\xi_r,\xi_{\theta}); \in T_{r,\theta}^*M; p_1(r,\theta;\xi_r,\xi_{\theta}) = 1 \}. $  It is then clear that $p_2 |_{\Sigma_{r,\theta}}  = \xi_{\theta} |_{\Sigma_{s,\theta}}$
is Morse function away from the poles $r = \pm 1$ where $f(r)$ vanishes. Consequently, it follows from Theorem \ref{QCI} that given {\em any} two balls $B_{\pm}$ containing the poles $r= \pm 1$ respectively,
\begin{equation} \label{supboundrotation}
\sup_{M \setminus B_{\pm}} |u_h| = O(h^{-1/4}).\end{equation}
Inside $B_{\pm},$ it is well-known that there are zonal-type joint eigenfunctions that saturate the H\"{o}rmander $O(h^{-1/2})$ in an $O(h)$-neighbourhood of the poles. Consequently, one can do no better than the $\|u_h\|_{L^{\infty}(M)} = O(h^{-1/2})$ bound {\em globally} in this case.

\subsubsection{Eigenfunction decay} To verify the fold condition, we assume that $E= (1, E_2) \in {\mathcal B}_{reg}.$ From the above, we can write
\begin{equation} \label{rotationfold}
 \Lambda_{\R}(E) = \{ (r,\theta; \xi_r,\xi_{\theta} = E_2);  \xi_r^2  =  1 - f^{-2}(r)  E_2^2 \}.
 \end{equation}

Since for $E \in {\mathcal B}_{reg}$, we have $E_2^2 < \max_{r \in [-1,1]} f^2(r),$  it is clear from (\ref{rotationfold}) that the restricted projection $\pi_{\Lambda_{\R}(E)}: \Lambda_{\R}(E) \to M$ is of fold type and so the decay estimates in Theorem \ref{expdecay} are satsified. The fact that these estimates are sharp in this case, is an immediate consequence of above separation of variables and WKB estimates applied to (\ref{reduced}).\\




\subsection{Laplacians and Neumann oscillators on Liouville tori}
\subsubsection{Liouville Laplacian}
Consider the two-torus $ M=\R^2/\Z^2$   with two, smooth, positive periodic functions $a,b :\R/\Z \to \R^+$ where, for convenience, we assume that   $\min_{0 \leq x_1 \leq 1} a(x_1) > \max_{0 \leq x_2 \leq 1} b(x_2).$ The corresponding Liouville metric is given by $g = ( a(x_1) + b(x_2) ) ( dx_1^2 + dx_2^2)$ and the associated Laplacian
$$P_1(h) = - \, [ a(x_1) + b(x_2) ]^{-1}  \, ( \,  (h \partial_{x_1})^2 +  (h \partial_{x_2})^2 \, ) $$
is QCI with commutant
$$P_2(h) = -  \, [ a(x_1) + b(x_2) ]^{-1} \, (  \, b(x_2) (h\partial_{x_1
})^2 -  a(x_1)  (h\partial_{x_2})^2 \, ).$$

Given $(1,E_2) \in {\mathcal B},$ it is easily checked that
\begin{equation} \label{lagformula}
\Lambda_{1,E_2} = \{ (x_1,x_2,\xi,\eta) \in T^*(\R^2/\Z^2);  \xi^2 = E_2 + a(x_1), \,\, \eta^2 = b(x_2) - E_2 \}. \end{equation}
When $E_2 \in (\max b, \min a),$ the projection $\pi_{\Lambda_E}$ has no singularities and consequently, $\Lambda_E$ is a Lagrangian graph. On the other hand, when either $E_2 \in (\min a, \max a) \cup (\min b, \max b),$ it is easily seen from (\ref{lagformula}) that $\pi_{\Lambda_E}: \Lambda_{E} \to \R^2/\Z^2$ is of fold type. Consequently, when $a, b \in C^{\omega}(\R^2/\Z^2)$, the decay estimates in Theorem \ref{expdecay} hold for the joint eigenfunctions.

As for Theorem \ref{QCI}, we simply note that given any point $z_0 = (x_0,y_0) \in \R^2/\Z^2,$ setting $\alpha = a(x_0) > b(y_0) = \beta$ we have that
$$p_2 |_{T_{z_0}^*} = \beta (\alpha + \beta)^{-1} \xi^2 - \alpha (\alpha + \beta)^{-1} \eta^2,$$
and since $S_{z_0}^* = \{ (\xi,\eta); \xi^2 + \eta^2 = \alpha + \beta >0 \}.$  the Morse property of $p_2 |_{S_{z_0}^*}$ follows since $\alpha > \beta.$ Indeed, in terms of the parametrization $ [0,2\pi] \ni \theta \mapsto (\sqrt{\alpha + \beta} \cos \theta, \sqrt{\alpha + \beta} \sin \theta),$ the function  $p_2 |_{S_{z_0}^*}(\theta) = \beta \cos^2 \theta - \alpha \sin^2 \theta$ which is clearly Morse as a function of $\theta \in  [0,2\pi]$ when $\alpha > \beta >0.$ Consequently, the {\em global} Hardy bound 
$$ \|u_h \|_{L^{\infty}(M)} = O(h^{-1/4})$$ for joint eigenfunctions in Theorem \ref{QCI} is satisfied in this case. Moreover, it is well-known \cite{To96, TZ03} that this bound is saturated in this case.

\medskip

\subsubsection{Liouville oscillators}

In this example, the underlying Riemannian manifold is $(\R^2/\Z^2,g$ where $g$ is the above Liouville metric. Consider the Schrodinger operator
$$P_1(h) = - ( a(x_1) + b(x_2) )^{-1} \, \Big ( h^2 \partial_{x_1}^2 + h^2 \partial_{x_2}^2 \Big) + b(x_2) - a(x_1).$$
One verifies that the Schrodinger operator 
$$P_2(h) =  - ( a(x_1) + b(x_2) )^{-1} \, \Big (  b(x_2) h^2 \partial_x^2  - a(x_1) h^2 \partial_{x_2}^2 \Big)  - a(x_1) \, b(x_2)$$
commutes with $P_1(h)$. Given a regular value $E_1$ of $p_1,$ it is easy to check that

\begin{equation} \label{lag2}
\Lambda_{E} = \Big\{ (x_1,x_2,\xi,\eta) \in T^* \R^2/\Z^2; \,\,\begin{gathered} \xi^2 = \big( a(x_1) + E_1/2 \big)^2 + E_2 - E_1^2/4, \,\,\, \\
\eta^2 = - \big( b(x_2) - E_1/2 \big)^2 + E_1^2/4 - E_2 \,\end{gathered} \Big \}. \end{equation} \
It is clear from (\ref{lag2}) that $\pi_{\Lambda_E}$ is either regular, or has fold-type singularities.

As for the Morse condition: the same reasoning as in the case of the Liouville Laplacian shows that with $\Sigma_{E_1,z} = \{ (z,\xi); p_1(z,\xi) = E_1 \}$ the function $p_2 |_{\Sigma_{E_1,z}}$ is Morse and consequently the joint eigenfunctions satisfy the Hardy-type bounds in Theorem \ref{QCI}.\

Both the Liouville Laplacian and oscillator extend to QCI systems on tori of arbitrary dimension \cite{HW} The fold assumption is satisfied for generic joint energy levels (see also Remark \ref{nonfold} below) and so is the Morse assumption in Theorem \ref{QCI}.

\subsection{Laplacians on ellipsoids}

Consider the ellipsoid $\mc{E} = \{w \in \R^3, \sum_{j=1}^3 \frac{ w_j^2}{a_j^2} = 1 \}$ where $0< a_3 < a_2 < a_1$ are fixed constants. Then, given the rectangles $R_+:=(0,T_1) \times (0,T_2)$ and $R_{-}= (T_1, 2 T_1) \times (0, T_2)$ we let $\Phi_{\pm}: R_{\pm} \to \mc{E} \cap \{ \pm w_2 >0 \}$ be the conformal mapping sending vertices of $R_{\pm}$ to the four umbilic points $p_j; j=1,...4$ of $\mc{E}.$ We choose orientations so that $ \Phi_{\pm}$ have the property that $\Phi_{+}(x,T_2) = \Phi_{-}(2T_1 - x, T_2)$ and $\Phi_{+}(x,0) = \Phi_{-}(2T_1 - x, 0).$  We henceforth let $\Phi:= \Phi_{\pm}: R \to \mc{E}$ denote the induced conformal mapping with $\Phi |_{R_{\pm}} = \Phi_{\pm}$ and $ R:= R_{+} \cup R_{-}.$

 One can show (see~\cite{CV} ) that the intrinsic Riemannian metric on $\mc{E}$ pulled-back to $R$ is locally of Liouville form
\begin{equation} \label{ellipsoid1}
ds^2 = \big( a(x_1) + b(x_2)  \big) \, (dx_1^2 + dx_2^2), \end{equation}
where  $a$ and $b$ are certain hyperelliptic functions that extend to real-analytic function on $\R$. Moreover, $a(k T_1) = a'(kT_1) =0,$ 
$b(k T_2) = b'(kT_2)=0$ and $a''(kT_1) \neq 0, \, b''(kT_2) \neq 0$ for all $k \in \Z.$  Consequently, $ds^2$ extends to a $C^\omega$-metric on the torus
$\R^2/ \Gamma$ where $\Gamma = T_1 \Z \oplus T_2 \Z.$ Of course, the induced metric (which we continue to denote by $ds^2$) on the torus $\R^2/ \Gamma$ degenerates at the lattice points in $\Gamma.$ 

Let $T = \R^2/ 2 \Gamma,$ the torus generated by the doubled lattice $2 \Gamma$ and $\sigma: T \to T$ the natural involution given by $\sigma(z) = - z.$ Then, the automorphism $\sigma$ has precisely four fixed points given by the vertices $(0,0), (T_1,0), (0,T_2)$ and $(T_1,T_2)$ of $R_+.$ The corresponding fundamental domain is $D \subset \R^2/2 \Gamma$ where
$$D = [0,2T_1] \times [0,T_2] \, / \, \sim$$
where $(x,0) \equiv ( 2T_1 -x,0)$ and $(x,T_2) \equiv (2T_1-x,T_2).$
In view of the conformal mapping $\Phi$, this gives an identification $\mc{E} \cong T/\sigma$. Consequently, under this identification, the torus $T$ is a two-sheeted covering of the ellipsoid, $\mc{E}$ with covering map
$$ \Pi: T \rightarrow \mc{E}; \quad \Pi(z) = z^2.$$
This covering map is ramified over the umbilic points and the Riemannian metric $g$ on $\mc{E}$ has the property that
$$ ds^2 = \Pi^* g.$$ 

\subsubsection{Proof of Theorem \ref{t:ellipse}}
\begin{proof} Let $B_j; j=1,2,3,4$ be open neighbourhoods of the umbilic points $p_j; j=1,2,3,4$. Then, in the complement $\mc{E} \setminus \cup_{j} B_j$, one has local coordinates $(x,y)$ in terms of which the metric has the form (\ref{ellipsoid}). Then, the same argument as in the case of the Liouville torus using Theorem \ref{QCI} shows that for the joint eigenfunctions of the corresponding QCI system on the ellipsoid, one gets that
$$ \sup_{x \in \mc{E} \setminus \cup_j B_j} |u_h(x)| = O(h^{-1/4}).$$
On the other hand, in the neighbourhoods $B_j;j=1,..,4$ of the umbilic points, we claim that
\begin{equation} \label{localbound}
 \sup_{x \in \cup_j B_j} |u_h(x)| = O(h^{-1/2} |\log h|^{-1/2}). \end{equation}
 
 To prove (\ref{localbound}), we split the analysis into two cases:
 Case (i): Suppose first that for any fixed $\delta = 1/4 - \epsilon$ we have 
 $x \in B_j \setminus B_j(h^{\delta}).$ Using the conformal $(x_1,x_2)$ coordinates above near the umbilic point $p_j$ we have $x_1(p_j) = x_2(p_j) =0$ and  
 $$a(x_1) = C x_1^2 + O(x_1^3), \,\, b(x_2) = C' x_2^2 + O(x_2^3), \quad x = (x_1,x_2) \in B \setminus B(h^\delta).$$

 Then, since $p= (a+b)^{-1} (\xi^2 + \eta^2)$ and $q = (a+b)^{-1} (b \xi^2 - a \eta^2)$ in this case, with $\min \{ a(x_1), b(x_2) \} \gtrapprox h^{2\delta}$ when $x \in B_j \setminus B_j(h^{\delta}).$ 
 Then, 
 $$ |dq|_{S_x^*M}\big|+ \big| \, d^2 q|_{S_{x}^*M} \, \big| \geq Ch^{2\delta}, \quad \text{when} \,\, x \in B \setminus B(h^{\delta}).$$
From the stationary phase estimate in  (\ref{second}) and (\ref{upshot1}) it then follows that
 $$ |u_h(x)|^2 \leq C h^{-1} \big( h^{1/2-2\delta}  + h \big)$$

 so that
 \begin{equation} \label{outside}
 \sup_{x \in B_j \setminus B_j(h^\delta)} |u_h(x)| \leq  C_1 h^{-1/4}   h^{-\delta}  + C_2  \leq C_3 h^{-1/2 + (1/4-\delta)}.
 \end{equation}
 
 The bound in (\ref{outside}) is  quite crude, but since $0< \delta < 1/4,$ it is a polynomial improvement over the universal H\"ormander bound and more than suffices for the argument here.

 Finally, we deal with Case (ii); where $x \in B(h^{\delta}).$ To do this, consider $S^*_{p_j}\mc{E}$. We have that $p_j$ is self-conjugate with constant return time $T_0>0$. There is a hyperbolic source/sink pair $\xi^{\pm}\in S^*_{p_j}\mc{E}$. In particular, let $U^{\pm}\subset S^*_{p_j}\mc{E}$ be neighborhoods of $\xi^{\pm}$. Then there is $C_{U_{\pm}}$ so that for $\xi\in S^*_{p_j}\mc{E}\setminus U^{\pm}$, 
 $$
 d(G^{nT_0}(p_j,\xi),\xi^{\mp})\leq C_{U_{\pm}}e^{-|n|/C_{U_{\pm}}},\qquad \mp n\geq 0.
 $$
Moreover, we have
 $$
 |dG^t|_{TS_{p_j}^*\setminus U_{\pm}}|\leq C_{U_{\pm}}e^{-|t|/C_{U_{\pm}}},\qquad \mp t\geq 0.
 $$
 Therefore, applying~\cite[Lemmas 5.1,5.2]{CG18} to both $A_{\pm}:=S_{p_j}^*\setminus U_{\pm}$, we have, using~\cite[Theorem 5]{CG18},
 \begin{equation}
 \label{case2}
 \sup_{x\in B_j(h^\delta)}|u_h(x)|\leq Ch^{-\frac{1}{2}}|\log h|^{-1/2}.
 \end{equation}
   
%
%
%

In summary, from (\ref{outside}) and (\ref{case2}) it follows that for joint eigenfunctions on the ellipsoid, one gets the {\em global} sup bound
\begin{equation*} \label{ellipsoid}
 \|u_h\|_{L^\infty(\mc{E})} = O(h^{-1/2} |\log h|^{-1/2})
 \end{equation*}
 which proves Theorem~\ref{t:ellipse}. \end{proof}

\appendix

\section{Uniformity of Parametrix Construction}

Since the purpose of this section is to understand uniformity in $\mu$, we will write $T_\Lambda=T_{\Lambda,\mu}$. 

\begin{prop}
\label{p:unifElliptic}
Suppose that $P\in S^{0,k}_{cla}$ a classically analytic pseuodifferential operator with $|p(\alpha)|\geq c\langle \xi\rangle^k$ on $|\alpha_\xi|\geq K$, $\alpha \in \Lambda$. There is $\mu_0>0$ and $C>0$ so that for $\mu>\mu_0$ there is $h_0=h_0(\mu)$ so that for all $0<h<h_0$ and $u\in L^2$ with $Pu=0$, 
$$
\| T_{\Lambda,\mu} u\|_{L^2(|\alpha_\xi|\geq K)}\leq Ce^{-1/Ch}\|u\|_{L^2}.
$$
\end{prop}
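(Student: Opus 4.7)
The overall strategy is the standard Sj\"ostrand-style combination of an Egorov-type conjugation identity for $T_{\Lambda,\mu}$ together with an analytic elliptic parametrix construction on $\Lambda$, adapted to track the dependence on the aperture parameter $\mu$ throughout. Concretely, the plan is: (i) build an analytic pseudodifferential operator $\tilde P_\mu$ on $\Lambda$ with $T_{\Lambda,\mu}P=\tilde P_\mu T_{\Lambda,\mu}+R_\mu$ and $R_\mu$ exponentially small; (ii) since $Pu=0$, get $\tilde P_\mu T_{\Lambda,\mu}u=-R_\mu u=O(e^{-1/Ch})\|u\|$; (iii) on $\{|\alpha_\xi|\ge K\}$ where $\tilde P_\mu$ is elliptic, use an analytic left parametrix to invert; (iv) verify every exponential rate and every symbol bound can be taken independent of $\mu$ for $\mu\ge\mu_0$.

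For step (i), following \cite[Proposition 1.2]{Sj}, I would form $T_{\Lambda,\mu}Pu$ by substituting the oscillatory integral representations of $T_{\Lambda,\mu}$ and $P$, then perform analytic stationary phase in the inner variables. The critical point equations define a canonical transformation that identifies $\tilde P_\mu$ as a semiclassical analytic pseudodifferential operator on $\Lambda$ whose principal symbol is the holomorphic continuation of $p$ restricted to $\Lambda$; the tail of the Borel resummation produces the exponentially small remainder $R_\mu$. For step (iii), since $|p(\alpha)|\ge c\langle\alpha_\xi\rangle^k$ on $\{|\alpha_\xi|\ge K\}\cap\Lambda$, I would build $Q_\Lambda\in S^{0,-k}_{cla}$ satisfying $Q_\Lambda\tilde P_\mu=\chi(\alpha_\xi)+R'_\mu$, with $\chi=1$ on $\{|\alpha_\xi|\ge K\}$, $\chi=0$ on $\{|\alpha_\xi|\le K/2\}$, and $\|R'_\mu\|_{L^2\to L^2}=O(e^{-1/Ch})$, by the standard iterative construction $q_0=\chi/\tilde p_\mu$, $q_{j+1}$ solving the next transport equation, summed \`a la Borel with geometric control of derivatives. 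Combining,
\[
\chi(\alpha_\xi)\,T_{\Lambda,\mu}u=Q_\Lambda\tilde P_\mu T_{\Lambda,\mu}u-R'_\mu T_{\Lambda,\mu}u=-Q_\Lambda R_\mu u-R'_\mu T_{\Lambda,\mu}u,
\]
and $L^2$-boundedness of $T_{\Lambda,\mu}$ uniform in $\mu$ (also due to \cite{Sj}, since the normalization $a\in S^{3n/4,n/4}_{cla}$ is chosen precisely for this) yields the claimed bound.

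The main obstacle is uniformity in $\mu$, which must be maintained through both the stationary-phase composition in (i) and the Borel resummation in (iii). The phase $\phi(\alpha,y)=\exp^{-1}_y(\alpha_x)\cdot\alpha_\xi+i\tfrac{\mu}{2}r^2(\alpha_x,y)\langle\alpha_\xi/\mu\rangle$ has Hessian in $y$ of size $\mu\langle\alpha_\xi/\mu\rangle\ \Id$, so the stationary phase expansion has inverse Hessian $\sim(\mu\langle\alpha_\xi/\mu\rangle)^{-1}$; each term in the expansion picks up a factor that is $O(\mu^{-1})$ for $|\alpha_\xi|\lesssim\mu$ and $O(\langle\alpha_\xi\rangle^{-1})$ for $|\alpha_\xi|\gg\mu$. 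Thus the symbol estimates \emph{improve} with $\mu$, and the Borel-summed remainder is bounded by $e^{-c\mu/h}$, which for $\mu\ge\mu_0$ is at least $e^{-c\mu_0/h}=e^{-1/Ch}$. The ellipticity radius $K$ does not depend on $\mu$ because the hypothesis is imposed on $p|_\Lambda$, and the parametrix construction only uses $\tilde p_\mu$ to leading order plus subprincipal corrections, which again improve in $\mu$. Finally, the threshold $h_0(\mu)$ enters because the analytic stationary phase expansion is only valid for $h\langle\alpha_\xi\rangle^{-1}$ small compared to $\mu^{-1}$; hence once $\mu$ is fixed, one chooses $h_0(\mu)$ small enough for the expansion to close, while the exponential constant $C>0$ in the final bound is $\mu$-independent.
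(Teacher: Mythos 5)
Your overall strategy matches the paper's: conjugate $P$ through the FBI transform by analytic stationary phase, use ellipticity of $p$ on $\{|\alpha_\xi|\ge K\}$ to invert via an analytic parametrix, and track the $\mu$-dependence. The paper presents the conjugation as an FBI transform $T_{q,\Lambda,\mu}$ with the formal analytic symbol of $P$ folded into the amplitude (rather than as an Egorov-type operator $\tilde P_\mu$ on $\Lambda$), and then replaces $T_{q,\Lambda,\mu}$ by $T_{\Lambda,\mu}$ via the analytic elliptic inversion of \cite[Prop.~6.2]{Sj}, \cite[Prop.~2.2]{GalkToth}; but these are two ways of writing the same argument.

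The part of your proposal that would not survive rigorous scrutiny is the mechanism for $\mu$-uniformity. You argue that the $y$-Hessian of $\phi$ being $\sim \mu\langle\alpha_\xi/\mu\rangle\,\Id$ causes each term of the stationary-phase expansion to gain a factor $O(\mu^{-1})$, hence a Borel remainder $e^{-c\mu/h}$ that improves with $\mu$. But in the composition $T_{\Lambda,\mu}Pu$ the stationary phase is carried out in the pair $(y,\eta)$, with $\eta$ the dual variable of $P$'s quantization; the joint Hessian has the block form $\left(\begin{smallmatrix}\partial_y^2\phi & I\\ I & 0\end{smallmatrix}\right)$ with determinant $\pm1$, and its inverse $\left(\begin{smallmatrix}0 & I\\ I & -\partial_y^2\phi\end{smallmatrix}\right)$ contains a block of size $\sim\mu$, not $\mu^{-1}$. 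There is no Hessian-driven gain; if anything the naive estimate runs the wrong way. The device the paper actually uses is the rescaling $\alpha_\xi\mapsto\mu\alpha_\xi$, $\tilde h=h/\mu$, under which the phase becomes $\mu$-independent in the $\tilde h$-calculus. The price is that every $\alpha_\xi$-derivative of the rescaled amplitude $a(\alpha_x,\mu\alpha_\xi,y)$ (and of the symbol $p$ evaluated along the critical set) produces a factor of $\mu$, so the $j$-th term of the Borel sum carries $\mu^j\tilde h^j = h^j$; compare the explicit Cauchy estimate $|b_j|\le C_0C^jj!\,\mu^j\tilde h^j\langle\alpha_\xi\rangle^{-j}=C_0C^jj!\,h^j\langle\alpha_\xi\rangle^{-j}$ recorded in the second appendix proposition. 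The rate one obtains is $\mu$-\emph{uniform}, not $\mu$-improving; it is this exact cancellation between the $\mu^j$ from the symbol and the $\mu^{-j}$ from $\tilde h^j$ that yields a truncation remainder $O(e^{-1/(Ch)})$ with $C$ independent of $\mu\ge\mu_0$. If you carry out your bookkeeping carefully you will run into these $\mu^j$ factors, and the claim that the ``symbol estimates improve with $\mu$'' will have to be downgraded to ``the symbol estimates are uniform in $\mu$.'' The conclusion stands, but this step of the reasoning needs repair.
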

\begin{proof}

Let $\psi\in C_c^\infty(\Lambda\cap \{|\alpha_\xi|<k\})$ so that $|p|\geq \frac{c}{2}\langle \xi\rangle^k$ on $\supp (1-\psi)$.
First note that, 
$$ 
 T_{\Lambda,\mu}u(\alpha_x,\mu \alpha_\xi)=\int_M e^{\frac{i}{\tilde{h}}[ \exp_y^{-1}(\alpha_x)\cdot \beta_\xi+\frac{i}{2}r^2(\alpha_x,y)\langle\beta_\xi\rangle]}a(\alpha_x,\mu\alpha_\xi,y)\chi(r(\alpha_x,y))u(y)dy 
 $$
 with $\tilde{h}=h/\mu$. 
By a standard application of analytic stationary phase 
$$
(1-\psi(\alpha_x,\mu \alpha_\xi ))(T_{\Lambda,\mu}Pu)(\alpha_x,\mu \alpha_\xi)=(1-\psi(\alpha_x,\mu \alpha_\xi))(T_{q,\Lambda,\mu}u)(\alpha_x,\mu\alpha_\xi) +R_{\Lambda,\mu}u
$$
where 
$$
T_{q,\Lambda,\mu}u(\alpha_x,\mu \alpha_\xi)=\int_M e^{\frac{i}{\tilde{h}}[ \exp_y^{-1}(\alpha_x)\cdot \alpha_\xi+\frac{i}{2}r^2(\alpha_x,y)\langle\alpha_\xi\rangle]}a(\alpha_x,\mu\alpha_\xi,y)q(\alpha_x,\alpha_\xi,y;\mu,h)\chi(r(\alpha_x,y))u(y)dy 
$$
with
$$
q(\alpha,y)=\sum_{j=0}^{C^{-1}\langle \alpha_\xi\rangle \tilde{h}^{-1}}\tilde{p}_j(y,-\mu d_y\varphi(\alpha, y))\mu^j\tilde{h}^j,\qquad \tilde{p}_j\in S_{cla}^{0,k-j},\qquad \tilde{p}_0=p_0,
$$
 $\varphi=\exp_y^{-1}(\alpha_x)\cdot \alpha_\xi +\frac{i}{2}r^2(\alpha_x,y)\langle \alpha_\xi\rangle,$
and $R_{\Lambda,\mu}u=O(e^{-\langle \mu \alpha_\xi\rangle/Ch}\|u\|_{L^2}).$  Here, the remainder bound comes from the fact that we have 
$$
|\tilde{p}_j(y,-\mu d_y\varphi(\alpha,y)|\leq C^j j!\langle \mu \alpha_\xi\rangle^{m-j}
$$
Observe also that since $d_y\varphi = -\alpha_\xi +O(r(\alpha_x,y))$, and $r(\alpha_x,y)ll1$, we have that $p_0(y,-\mu d_y\varphi)$ is elliptic on $\supp (1-\psi(\alpha_x,\mu \alpha_\xi)).$

Next, since $Pu=0$, we have that 
$$
(1-\psi(\alpha))T_{q,\Lambda,\mu}u(\alpha)=O(e^{-\langle \alpha_\xi\rangle/Ch}\|u\|_{L^2}). 
$$
Therefore, we need only show that one can replace $T_{q,\Lambda,\mu}$ by $T_{\Lambda, \mu}$. For this, we follow the construction in~\cite[Propoosition 6.2]{Sj} (see also~\cite[Proposition 2.2]{GalkToth}). As above, when it comes to the application of stationary phase, we rescale $\alpha_\xi \mapsto \mu \alpha_\xi$ and the small parameter is $\tilde{h}=h/\mu$, but derivatives of the symbol acquire powers of $\mu$. The same arguemnts then complete the proof.
\end{proof}

\begin{prop}
With $T_{\Lambda,\mu}$ as above, there exists $\mu_0>0$, so that for all $N>0$ there is $C_N>0$ so that for all $\mu>\mu_0$ there is $h_0(\mu)$ so that for $0<h<h_0$,
$$
S_{\Lambda,\mu}T_{\Lambda,\mu}=\Id +R_\mu
$$
where 
$$
\|R_\mu\|_{L^2\to C^N}\leq C_Ne^{-1/(hC_N)}.
$$
\end{prop}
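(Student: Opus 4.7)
My plan is to follow the standard Sjöstrand left-parametrix construction by analytic stationary phase, tracking carefully the dependence on the aperture parameter $\mu$. The key observation, already used in the proof of Proposition~\ref{p:unifElliptic}, is that after rescaling $\alpha_{\xi}\mapsto \mu\alpha_{\xi}$, the effective semiclassical parameter becomes $\tilde h=h/\mu$, and the phase takes the $\mu$-independent form
$$
\varphi(\alpha,y)=\exp_y^{-1}(\alpha_x)\cdot \alpha_\xi+\tfrac{i}{2}r^2(\alpha_x,y)\langle \alpha_\xi\rangle.
$$
Thus one can run the usual construction with $\tilde h$ in place of $h$, and all geometric constants (in particular the lower bound on the positivity of $\Im \partial_y^2\varphi$) are independent of $\mu$. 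Derivatives of the symbols produce extra factors of $\mu$, but each symbol correction in the stationary-phase expansion carries a compensating factor $\tilde h=h/\mu$.

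First I would write $S_{\Lambda,\mu}$ in the form \eqref{left} with a symbol $b(x,\alpha;h)=b(x,\alpha_x,\mu\alpha_\xi;\tilde h)$ to be constructed as a formal classical analytic symbol $b\sim \sum_{j\ge 0}\tilde h^j b_j$. Writing out
$$
(S_{\Lambda,\mu}T_{\Lambda,\mu})(x,y)=\int e^{\tfrac{i}{\tilde h}\bigl[-\overline{\varphi(x,\alpha)}+\varphi(\alpha,y)\bigr]}\,b(x,\alpha;h)\,a(\alpha,y;h)\,\chi\,d\alpha
$$
(again after rescaling $\alpha_\xi\to \mu\alpha_\xi$), I would apply Sjöstrand's analytic stationary phase in $\alpha$ near the unique critical point at $\alpha_x=y=x,\ \alpha_\xi=$ the corresponding cotangent direction, where $\Im(\partial_\alpha^2[-\overline{\varphi(x,\alpha)}+\varphi(\alpha,y)])\gtrsim \Id$ uniformly in $\mu$ (this is the key geometric estimate and is $\mu$-independent once phrased in terms of $\tilde h$). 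Solving the resulting transport equations determines $b_0,b_1,\ldots$ iteratively so that $S_{\Lambda,\mu}T_{\Lambda,\mu}=\Id+R_\mu^{(N)}$ modulo a formal symbol of order $\tilde h^N$. Realizing the formal sum via standard summation of analytic symbols (truncating at $j\sim \tilde h^{-1}/C$) produces an actual operator $S_{\Lambda,\mu}$ and a remainder whose Schwartz kernel is bounded, together with all its derivatives, by $C_N e^{-c/\tilde h}=C_N e^{-c\mu/h}$.

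Since $\mu\ge \mu_0>0$, this gives $\|R_\mu\|_{L^2\to C^N}\le C_N e^{-1/(C_N h)}$ with $C_N$ independent of $\mu$, after choosing $h_0(\mu)$ small enough that the asymptotic summation is valid (this is where the $\mu$-dependence of $h_0$ enters: we need $h/\mu$ sufficiently small relative to the analyticity radius and to the cutoff scale $\langle\xi\rangle/C$ in \eqref{scsymbol}). The $C^N$ bound on the kernel follows because each $y$-derivative brings down a factor of $\langle \alpha_\xi\rangle/\tilde h$, but the Gaussian decay $e^{-\mu r^2\langle \alpha_\xi\rangle/h}$ in the FBI phase plus analyticity allows one to absorb these into the exponential gain $e^{-c\mu/h}$ at the cost of enlarging $C_N$.

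The main obstacle is precisely the uniform-in-$\mu$ bookkeeping of the analytic stationary phase expansion: one must check that the contour deformation (Kuranishi trick) performed to realize analytic stationary phase is admissible uniformly in $\mu$, which amounts to checking that the complex critical manifold stays in a $\mu$-independent neighborhood of the real one, and that the estimates $|b_j|\le C_0 C^j j!\langle \alpha_\xi\rangle^{-j}$ in \eqref{scsymbol} hold with constants independent of $\mu$. Both follow from the fact that, after the $\mu\alpha_\xi$ rescaling, the phase $\varphi$ and all stationary-phase data become $\mu$-independent; only the symbols $a,b$ get evaluated at $\mu\alpha_\xi$, and this dependence is absorbed into the $\tilde h=h/\mu$ expansion rather than producing uncontrolled growth.
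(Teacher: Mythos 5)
Your overall approach — rescale $\alpha_\xi\mapsto\mu\alpha_\xi$, set $\tilde h=h/\mu$, and run the standard Sj\"ostrand left-parametrix construction — is the same as the paper's, and you correctly identify the key bookkeeping point that each $\alpha_\xi$-derivative of the rescaled amplitude $a(\alpha_x,\mu\alpha_\xi,y)$ produces a factor $\mu$ which pairs with a factor $\tilde h$. However, you do not carry this through consistently, and there is a genuine gap in the claimed exponential rate.

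You assert that the remainder kernel is bounded by $C_N e^{-c/\tilde h}=C_N e^{-c\mu/h}$ with $c>0$ independent of $\mu$, as if the $b_j$ satisfied Cauchy estimates $|b_j|\le C_0 C^j j!\langle\alpha_\xi\rangle^{-j}$ uniformly in $\mu$. This is not the case: precisely because of the $\mu^j$ factors you flag, the correct estimate (as the paper records explicitly) is
$$
|b_j(\alpha;\mu)|\le C_0\,C^j\,j!\,\mu^j\,\tilde h^{\,j}\,\langle\alpha_\xi\rangle^{-j}=C_0\,C^j\,j!\,h^j\,\langle\alpha_\xi\rangle^{-j},
$$
so the effective expansion parameter is $h$, not $\tilde h$. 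If one tried to truncate the formal sum at $j_0\sim 1/(C\tilde h)=\mu/(Ch)$ as your $e^{-c/\tilde h}$ claim would require, Stirling gives $C^{j_0}j_0!\,h^{j_0}\approx(Cj_0h/e)^{j_0}\approx(\mu/e)^{j_0}$, which diverges for $\mu>e$. One must instead truncate at $j_0\sim 1/(Ch)$, which yields the \emph{weaker but $\mu$-uniform} bound $e^{-c/h}$. Your intermediate claim $e^{-c\mu/h}$ is therefore false; it happens to imply the correct final bound $e^{-1/(C_N h)}$ when $\mu\ge\mu_0$, but since the lemma is not true, the chain of reasoning does not close. The missing step is exactly the Cauchy-estimate-plus-Stirling computation the paper performs to show that the $\mu^j$ growth and the $\tilde h^{\,j}$ smallness combine to give a truncation error $O(e^{-C/h})$ with $C$ uniform in $\mu\ge\mu_0$.
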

 \begin{proof} After  rescaling the fiber coordinates $\alpha_{\xi} \mapsto \mu \alpha_{\xi}$ and setting $\tilde{h}:= \frac{h}{\mu}$, we have 
 $$T_{\Lambda}u(\alpha_x,\mu\alpha_\xi)=\int_M e^{\frac{i}{\tilde{h}}[ \exp_y^{-1}(\alpha_x)\cdot \alpha_\xi+\frac{i}{2}r^2(\alpha_x,y)\langle\alpha_\xi\rangle]}a(\alpha_x,\mu\alpha_\xi,y)u(y)dy 
 $$ it follows 
 by the standard  left parametrix construction  for $T_{\Lambda}(h)$ the one can find a formal analytic symbol $b \sim \sum_j b_j  h^j$ and associated left parametrix as in (\ref{left}) with the property that
 $$ S_{\Lambda}(\tilde{h}) T_{\Lambda}(\tilde{h}) = Id + R_{\mu}(\tilde{h})$$
 where $$ \| R_{\mu}(\tilde{h}) \|_{C^{\infty}} = O( e^{-C(\mu)/\tilde{h}}).$$
 
 An explicit realization of $b$ is of the form
$$b_{\mu}(\alpha;h) = \sum_{j; |j| \leq \tilde{h} /C_1}   b_j(\alpha;\mu)$$
and it is not difficult to show that by standard Cauchy estimates 
\begin{equation} \label{Cauchyestimates}
 |b_j(\alpha;\mu)| \leq C_0 C^j \, j! \, \mu^{j} \tilde{h}^j \langle \alpha_\xi\rangle^{-j} = C_0 C^j \, j! \, h^j \langle \alpha_\xi\rangle^{-j}.
 \end{equation}
The extra $\mu^j$ factor in (\ref{Cauchyestimates}) comes from the rescaling $\alpha_{\xi} \mapsto \mu \alpha_{\xi}$ and the parametrix construction above (note that each $\alpha_{\xi}$-derivative of the rescaled symbols pulls out a factor of $\mu$). Using (\ref{Cauchyestimates}) and Stirling's formula it then follows that for $\mu \geq \mu_0$ there is a uniform constant $C>0$ such that
$$ \| R_{\mu}(\tilde{h}) \|_{C^{\infty}} = O( e^{-C/h}).$$
That proves the Proposition and establishes the uniform bound we need in (\ref{inversion}). \end{proof}
\bibliography{biblio}
\bibliographystyle{alpha}

\end{document}